\crefname{hypothesis}{Hypothesis}{Hypotheses}
\title{A low-order locking-free multiscale finite element method for isotropic elasticity\thanks{Submitted to the editors DATE.
\funding{This work was partially funded by the MCTI/RNP-Brazil under the HPC4E Project. The second author was partially supported by the CNPq/Brazil No. 140764/2015-1, by the company Bull Ltda and the SCAC from the French Embassy in Brazil under the CIFRE program. The third author was partially supported by CNPq/Brazil No. 301576/2013-0. This work was authored in part by the National Renewable Energy Laboratory, operated by Alliance for Sustainable Energy, LLC, for the U.S. Department of Energy (DOE) under Contract No. DE-AC36-08GO28308. The views expressed in the presentation do not necessarily represent the views of the DOE or the U.S. Government. The U.S. Government retains and the publisher, by accepting the article for publication, acknowledges that the U.S. Government retains a nonexclusive, paid-up, irrevocable, worldwide license to publish or reproduce the published form of this work, or allow others to do so, for U.S. Government purposes.}}}
\author{\
Ant\^onio Tadeu A. Gomes\thanks{Department of Computational and Mathematical Methods, National Laboratory for Scientific Computing  - LNCC, Av. Get\'ulio Vargas, 333, 25651-070 Petr\'opolis - RJ, Brazil (\email{atagomes@lncc.br})}
\and
Weslley S. Pereira\thanks{Computational Science Center, National Renewable Energy Laboratory - NREL, 15257 Denver West Parkway Golden, CO 80401 US
(\email{weslley.dasilvapereira@nrel.gov})}
\and
Fr\'ed\'eric Valentin\thanks{Department of Computational and Mathematical Methods, National Laboratory for Scientific Computing  - LNCC, Av. Get\'ulio Vargas, 333, 25651-070 Petr\'opolis - RJ, Brazil and Nachos Project-Team, Inria Sophia Antipolis - Méditerranée, 2004 Route des Lucioles, 06902 Valbonne, France
(\email{valentin@lncc.br}, \email{frederic.valentin@inria.fr})}}
\newcommand\Div{{\nabla\cdot\,}}
\newcommand\bdiv{{\Div}}
\newcommand\ess{{\operatorname{ess}}}
\newcommand\id{I}
\newcommand\dO{{\partial\Omega}}
\newcommand\dK{{\partial K}}
\newcommand\OO{{\Omega}}
\newcommand{\triH}{\calP}
\newcommand{\trih}{{\mathcal{T}_{h}}}
\newcommand{\edgeH}{\mathcal{E}_H}
\newcommand{\sumKP}{\sum_{K\in\calP}}
\newcommand\Hbdiv[1]{{H(\boldsymbol{\operatorname{div}}; #1)}}
\newcommand\RR{{\mathbb R}}
\newcommand\NN{{\mathbb N}}
\newcommand\PP{{\mathbb P}}
\newcommand\xx{{\boldsymbol x}}
\newcommand\vv{{\boldsymbol v}}
\newcommand\ww{{\boldsymbol w}}
\newcommand\vvrm{{\boldsymbol v}^{\mathrm{rm}}}
\newcommand\uu{{\boldsymbol u}}
\newcommand\uurm{{\boldsymbol u}^{\mathrm{rm}}}
\newcommand\nn{{\boldsymbol n}}
\newcommand\ff{{\boldsymbol f}}
\newcommand\bq{\boldsymbol q}
\newcommand\bg{{\boldsymbol g}}
\newcommand\bm{\boldsymbol \mu}
\newcommand\bl{\boldsymbol \lambda}
\newcommand\btau{{\boldsymbol \tau}}
\newcommand\HH{{\boldsymbol H}}
\newcommand\LL{{\boldsymbol L}}
\newcommand\bL{{\boldsymbol{\Lambda}}}
\newcommand\EE{{\boldsymbol{\underline\varepsilon}}}
\newcommand\stress{{\boldsymbol \sigma}}
\newcommand\fStress{{\boldsymbol {\underline\sigma}}}
\newcommand\su{{\EE(\uu)}}
\newcommand\suh{{\EE(\uu_h)}}
\newcommand\sv{{\EE(\vv)}}
\newcommand\svh{{\EE(\vv_h)}}
\newcommand\VV{\mathbf V}
\newcommand\VVRM{{\mathbf V}_{\mathrm{rm}}}
\newcommand\tildeVV{\widetilde{\mathbf V}}
\newcommand\calE{\mathcal{E}}
\newcommand\calH{\mathcal{H}}
\newcommand\calP{\mathscr{P}}
\newcommand\calN{\mathscr{N}}
\newcommand\calT{\mathcal{T}}
\newcommand\bK{\partial K}
\newcommandx{\unsure}[2][1=]{\todo[linecolor=red,backgroundcolor=red!25,bordercolor=red,#1]{#2}}
\newcommandx{\change}[2][1=]{\todo[linecolor=blue,backgroundcolor=blue!25,bordercolor=blue,#1]{#2}}
\newcommandx{\info}[2][1=]{\todo[linecolor=green,backgroundcolor=green!25,bordercolor=green,#1]{#2}}
\newcommandx{\improvement}[2][1=]{\todo[linecolor=Plum,backgroundcolor=Plum!25,bordercolor=Plum,#1]{#2}}
\newcommandx{\thiswillnotshow}[2][1=]{\todo[disable,#1]{#2}}
\newcommand{\mincludegraphics}[1]{\includegraphics[width=.48\linewidth]{#1}}
\newcounter{assumptionCounter}
\theoremstyle{plain}
\crefname{assumption}{Assumption}{Assumptions}
\Crefname{assumption}{Assumption}{Assumptions}
\begin{document}

\maketitle

\begin{abstract}
The multiscale hybrid-mixed (MHM) method consists of a multi-level strategy to approximate the solution of boundary value problems with heterogeneous coefficients.
In this context, we propose a family of low-order finite elements for the linear elasticity problem which are free from Poisson locking.
The finite elements rely on face degrees of freedom associated with multiscale bases obtained from local Neumann problems with piecewise polynomial interpolations on faces.
We establish sufficient refinement levels on the fine-scale mesh such that the MHM method is well-posed, optimally convergent under local regularity conditions, and locking-free.
Two-dimensional numerical tests assess theoretical results.
\end{abstract}

\begin{keywords}
multiscale finite element,
domain decomposition,
polytopes,
elasticity,
high performance computing,
locking-free
\end{keywords}

\begin{AMS}
  65N30, 65N12, 65N22
\end{AMS}

\section{Introduction}
\label{sec:intro}

The Multiscale Hybrid-Mixed (MHM) methods are upscaling numerical strategies to solve boundary value problems on coarse partitions \cite{mhmDarcyAnalisys,HarVal16}.
As in other multiscale finite element methods, the MHM methods use localized multiscale basis functions to recover structures of the solution lost by the unresolved fine scales.
From a computational viewpoint, this class of methods fit well in massively parallel computer systems because they allow for the multiscale basis functions to be computed in a decoupled fashion.
The global degrees of freedom in the MHM methods localize on the faces of the mesh skeleton, and convergence can be achieved without refining the global partition \cite{BarJaiParVal20,10.1093/imanum/drac041}.

The MHM method was introduced to the two- and three-dimensional linear elasticity model in \cite{Harder2016} using polynomial interpolations on faces and in the second-level discretization.
This work was recently extended to global polytopal partitions and new finite elements \cite{10.1093/imanum/drac041}, encompassing discontinuous interpolations on faces and refined local meshes.
Both works use the countinuous Galerkin method on top of the displacement elasticity formulation to build their multiscale basis functions.
The MHM-Hdiv method \cite{DevFarGomPerSanVal19} uses the same global problem from \cite{Harder2016} but, in contrast, its local problems use mixed finite elements to recover a global Hdiv-conforming numerical stress tensor.
The MHM method proposed in \cite{Pereira2016} uses the same interpolation spaces from \cite{10.1093/imanum/drac041} but a different local-level solver that deals with quasi-incompressible isotropic materials.
This last work lacks proper stability and convergence analyses, and the locking-free property validation.
We shall highlight that all four examples use the same global level problem and the choice of local level solver depend on the problem one wants to solve.

Inf-sup stability of most of the MHM methods presented in the literature relies on high-order polynomial spaces in the local level, e.g., \cite{mhmDarcyAnalisys,Araya2017,Duran2019,ValEtAl17mhm,Harder2016,mhmAdveccao,LatParSchVal18}.
A new way to prove stability was introduced in \cite{BarJaiParVal20} for the two-dimensional Poisson equation.
It encompasses the low-order global-local pairs $(\ell,\ell)$ and $(\ell,\ell+1)$, where $\ell$ is the polynomial degree used in the first-level solver.
Low order finite elements are appealing for multiscale problems since they are computationally cheaper option for low-regularity problems.

It is well-known that standard low-order finite element methods applied to the displacement formulation of nearly incompressible elasticity problems result in poor observed convergence rates \cite{lockingBabuska}.
In the $h$-version of the continuous Galerkin method using piecewise linear polynomials on triangular meshes, the theoretical convergence rate only hold for $h$ sufficiently small.
This phenomenon is commonly known as locking (or Poisson locking).
One way to circumvent this issue is to rewrite the elasticity problem in its mixed version, and approximate stress and displacement fields separately.
Another possibility is to introduce an extra pressure variable, mimicking what is done in the Stokes problem.
Since the choice of pairs of inf-sup stable approximation spaces is non-trivial in both cases, a classical approach to overcome this limitation employs stabilized schemes \cite{stabilizedElasticity}.

In this paper, we provide the stability and a priori convergence analysis for the family of finite elements proposed in \cite{Pereira2016}.
The stability of the MHM method is based on the low-order global-local compromises $(\ell,\ell)$ and $(\ell,\ell+1)$ following closely \cite{BarJaiParVal20}.
We prove optimal mesh-based convergence for displacement, pressure and traction approximations under local regularity assumptions.
We show the resulting MHM method is locking-free in the sense that the stability and a convergence constants do not degenerate when the Poisson ratio approaches $1/2$.
The second-level solver uses a Least Squares stabilization for the Galerkin method \cite{stabilizedElasticity}, which possibly adopting the (appealing) equal-order polynomial spaces for displacement and pressure.
We present some analytical numerical tests to verify the theoretical properties.
We find an additional $O(+H/2)$ convergence in the skeleton-based refinement strategy, that was also observed in other families of MHM methods, e.g., \cite{10.1093/imanum/drac041,BarJaiParVal20}.

Using a problem with nearly incompressible materials, we verify the MHM method from \cite{10.1093/imanum/drac041,Pereira2016} improves the robustness of the continuous Galerkin method using compatible configurations between them.
The same example shows how the locking-free finite elements present here and the Galerkin Least Squares method from \cite{stabilizedElasticity} solves the Poisson locking issue.
Finally, we test the versatility of the MHM method to solve a heterogeneous elasticity problem in a composite domain with nearly incompressible materials.
We show it is sufficient to use the locking-free finite elements only in the nearly incompressible region to improve overall accuracy of the MHM solver.

The paper's outline is as follows.
In \cref{sec:model}, we present the isotropic elasticity problem in its classical and displacement-pressure forms.
In \cref{sec:mhm}, we revisit the MHM method from \cite{10.1093/imanum/drac041} that handles high-contrast heterogeneous coefficients using a multi-level approach.
In \cref{sec:lockingFree}, we present a family of stabilized MHM methods based on the Least-Squares stabilization of the Galerkin method.
We prove the those methods are well-posed and locking-free. As a subproduct, we show how to adapt the original Galerkin Least Squares method to pure Neumann problems.
We prove the family of methods is optimally convergent under local regularity conditions in \cref{sec:convergenceMHM}.
We dedicate \cref{sec:numResultsElasticity} to the numerical validation of the theory, and to show some numerical estimates of the MHM method.
We present some concluding remarks in \cref{sec:concl}.

\begin{remark}
Above, and hereafter, we adopt the typical function spaces and differential operators \cite{ernGuermondFEM}.
The \textbf{bold} style indicates $d$-dimensional vector spaces, e.g., $\LL^2(\OO) := L^2(\OO)^d$.
\end{remark}

\section{The elasticity problem}
\label{sec:model}

Let $\OO \subset\RR^d$, $d\in\{2,\,3\}$, be an open and bounded domain with polygonal Lipschitz boundary $\dO = \overline{\Gamma_D \cup \Gamma_N}$, $\Gamma_D \cap \Gamma_N = \emptyset$, and $\Gamma_D \neq \emptyset$.
Consider the elasticity problem of finding a displacement $\uu:\OO\to\RR^d$ that satisfies
\begin{align}
	\label{elliptic}
	- \Div\left(\fStress(\uu)\right) &= \ff \;\text{ in }\; \OO\,,
	\qquad \uu = \mathbf{0} \;\text{ on }\; \Gamma_D
	\quad\text{and}
	\quad \fStress(\uu)\,\nn = \bg \;\text{ on }\; \Gamma_N\,,
\end{align}
where $\ff \in \LL^2(\OO)$ is the distributed load,
$\bg \in \LL^2(\Gamma_N)$ is the traction,
$\nn$ is the outward unit normal vector field defined a.e. on $\dO$,
$\stress := \fStress(\uu)$ is the isotropic stress tensor
\begin{align}
	\label{stressFunc}
	\fStress(\uu) &:= 2\,G\,\left(\su + \frac{\nu}{1-2\,\nu}\, (\Div\uu)\,\id_{d}\right)\,,
\end{align}
$G,\nu\in L^\infty(\OO)$ are the shear modulus and the Poisson’s ratio, that possibly depend on $\OO$, $\id_{d} \in \RR^{d\times d}$ is the identity matrix, and $\su := (\nabla\uu+(\nabla\uu)^t) / 2$ is the infinitesimal strain tensor.
We assume there exist $G_0,\nu_0 \in \RR$ such that $0 < G_0 \leqslant G$ and $0 < \nu_0 \leqslant \nu < 1/2$ a.e. in $\OO$.
Under these assumptions, there exists a unique solution $\uu \in \HH^1(\OO)$ for \cref{elliptic} in a distributional sense via the BNB Theorem (c.f. \cite[Theorem 2.6]{ernGuermondFEM}).

Materials with high Poisson's ratio ($\nu \approx 1/2$) are usually referred as nearly incompressible, or quasi-incompressible.
We call \emph{Poisson locking phenomenon} the poor convergence order appearing in a numerical method when used to obtain approximate solutions to linear elasticity problems with nearly incompressible materials.
This phenomenon occurs, for instance, in low-order continuous Galerkin formulations for \cref{elliptic} using piecewise polynomials on triangular and quadrilateral meshes \cite{lockingBabuska}.

A well known technique to avoid the Poisson locking starts by rewriting \cref{elliptic} in an equivalent mixed problem that approximates stress and displacement fields separately.
Alternatively, consider the following displacement-pressure mixed form proposed in \cite{Herrmann1965}
\begin{align}
	\label{mixed}
	\begin{aligned}
	- \Div\left(2\,G\,\su\right) + \Div\left(p\,\id_{d}\right) &= \ff &&\text{ in }\; \OO\,,\\
	\Div\uu + \upepsilon\,p &= 0 && \text{ in }\; \OO\,,\\
	\uu &= \mathbf{0} && \text{ on }\; \Gamma_D\,,\\
	\left(2\,G\,\su - p\,\id_{d}\right)\,\nn &= \bg && \text{ on }\; \Gamma_N\,,	
	\end{aligned}
\end{align}
where $p$ is the (Herrmann) pressure
\begin{align}\label{eq:pAndEpsilon}
	p := -\frac{1}{\upepsilon}\,\Div\uu\,,\qquad\text{and}\quad
	\upepsilon := \frac{1-2\nu}{2\,G\,\nu}\,.
\end{align}
%
From a practical viewpoint, however, obtaining inf-sup stable numerical methods for usual discrete mixed formulations is non-trivial \cite[§8.12.1]{boffi2013mixed}.
A classical approach to overcome the inf-sup limitation of \cref{mixed} that keeps the desirable locking-free property is to employ stabilized schemes \cite{stabilizedElasticity}.
\Cref{sec:GaLS} presents the Galerkin Least Squares (GaLS) method, a stabilized finite element scheme to solve displacement-pressure mixed formulation.

\section{The MHM method}
\label{sec:mhm}

Let $\calP$ be a collection of open and bounded $d$-polytopes $K$, such that $\overline{\OO}=\cup_{K\in\calP}^{} \overline{K}$.
Associated to $K \in \calP$, we define the spaces
\begin{align*}
\VVRM(K) &:= \left\{\vvrm\in \HH^1(K)\,:\, \EE(\vvrm) = 0\right\}\,,\\
\tildeVV(K) &:= \left\{\tilde\vv\in \HH^1(K)\,:\, \int_K \tilde\vv\cdot\vvrm\dif x = 0\,,\quad\forall\, \vvrm\in \VVRM(K)\right\}\,,
\end{align*}
The functions $\vvrm \in \VVRM(K)$, known as rigid body motions, can be written as $\vvrm(\xx) = \mathbf a + \beta\, (-x_2, x_1)$, if $d=2$, and $\vvrm(\xx) = \mathbf a + \mathbf b \times \xx$, if $d=3$, where $\mathbf a, \mathbf b \in \RR^d$ and $\beta \in \RR$.
Therefore, $\VVRM(K)$ is a finite dimensional space of dimension $d\,(d+1)/2$.
We also define the global spaces associated to $\calP$
\begin{align*}
\bL &:= \left\{ \btau\,\nn^K\sVert[0]_{\bK}\,,\;\forall\, K\in\calP \, : \, \btau\in\Hbdiv{\OO}\,,\; \btau\,\nn\sVert[0]_{\Gamma_N} = \mathbf 0\right\}\,,\\
\VVRM &:= \left\{\vvrm\in \LL^2(\OO)\,:\, \vvrm\sVert[0]_K\in\VVRM(K)\,,\quad\forall\, K\in \calP\right\}\,,\\
\tildeVV &:= \left\{\tilde\vv\in \LL^2(\OO)\,:\, \vv\sVert[0]_K\in\tildeVV(K)\,,\quad\forall\, K\in \calP\right\}\,,
\end{align*}
where the symbol $\nn^K$ denotes the outward unit normal vector field on the boundary $\dK$. Note the functions $\bm \in \bL$ belong to $\HH^{-\frac{1}{2}}(\partial K)$ for each $K \in \calP$.

We denote by $\Pi_{RM}^{}$ the $\LL^2(\OO)$ projection onto $\VVRM$, i.e.,  given  $\vv \in \HH^1(\calP) := \tildeVV \oplus \VVRM$, the function $\Pi_{RM}^{}\vv\,|_K^{}$ satisfies
\begin{align}\label{eq:projectionVRM}
\int_K \Pi_{RM}\vv \cdot \vvrm\,d\xx = \int_K \vv\cdot\vvrm\,d\xx \quad \text{for all }\vvrm \in \VVRM(K)\,,
\end{align}
which immediately leads to the following estimates for all $\vv \in \HH^1(\calP)$
\begin{align}\label{eq:stabilityVsum}
\|\vv-\Pi_{RM}\vv\|_{0,\OO} &\le \|\vv\|_{0,\OO}
\quad\text{and}\quad 
\|\EE(\vv-\Pi_{RM}\vv)\|_{0,\calP} \le \|\EE(\vv)\|_{0,\calP}\,.
\end{align}
Hereafter, $\|\cdot\|_{m,D}$ and $|\cdot|_{m,K}$ denote the usual norm and semi-norm, respectively, in the spaces $H^m(D)$, $\HH^m(D)$ and $H^m(D)^{d\times d}$, for $m \in \NN \cap \{0\}$ and $D \subset \RR^d$,
and
\begin{align*}
	\|\cdot\|_{m,\calP} := \left(\sumKP \|\cdot\|_{m,K}^{2}\right)^{\frac12}\,,\qquad
	|\cdot|_{m,\calP} := \left(\sumKP |\cdot|_{m,K}^{2}\right)^{\frac12}\,.
\end{align*}	
are a norm and a semi-norm in $\HH^1(\calP)$.
Also, we equip $\bL$ with the norm
\begin{equation}\label{eq:bnd_b}
\|\bm\|_\bL^{} := \sup_{\vv \,\in\, \HH^1(\calP)\backslash\{\mathbf{0}\} }^{} \frac{\sum_{K\in\calP} \langle \bm,\vv \rangle_{\dK}^{}}{\|\vv\|_{1,\calP}^{}}  \qquad \text{for every } \,\bm\in \bL\,.
\end{equation}
%
The notation $\langle\cdot,\cdot\rangle_{\partial K}$ represents the duality pairing between $\HH^{-\frac{1}{2}}(\partial K)$ and 
$\HH^{\frac{1}{2}}(\partial K)$
in a way that if $\vv \in \HH^1(K)$ and $\bm \in \LL^2(\dK)$ then $\langle\bm,\vv\rangle_{\partial K} = \int_{\dK} \bm\cdot\vv\dif s$.

We now present the continuous formulation that is basis for the MHM methods.

~
\paragraph{Global-local formulation}

Let $T:\bL\rightarrow \tildeVV$ and $\hat T:\LL^2(\OO)\rightarrow \tildeVV$ be the linear operators such that, on each $K \in \calP$, $T(\bm)\sVert[0]_K$ and $\hat{T}(\bq)\sVert[0]_K$
are the unique solutions in $\tildeVV(K)$ of
\begin{align}
a_K( T(\bm), \tilde\vv )
&= \langle \bm, \tilde\vv\rangle_{\dK} && \text{for all } \bm\in \bL,\; \tilde \vv\in \tildeVV(K),\label{e:pel} \\
a_K( \hat{T}(\bq), \tilde\vv )
&= \int_K \bq\cdot\tilde\vv \dif x + \int_{\dK \cap \Gamma_N} \bg\cdot\tilde\vv \dif s
 && \text{for all } \bq\in \LL^2(\OO),\; \tilde \vv\in \tildeVV(K)\,,\label{e:pef}
\end{align}
respectively, where $\tilde \vv\in \tildeVV(K)$ and
\begin{align*}
	a_K\left( \uu, \vv \right) := \int_K \left( 2\,G\,\su : \sv + \frac1{\upepsilon}\, (\Div\uu)\, (\Div\vv) \right) \dif x
	 && \text{for all } \uu, \vv\in \HH^1(\calP).
\end{align*}
Owing to these definitions, we can rewrite the solution of \cref{elliptic} equivalently as
\begin{equation}\label{decomposition}
\uu = \uurm+T(\bl)+\hat{T}(\ff)\,,
\end{equation}
where $(\bl, \uurm)\in\bL\times \VVRM^{}$ solves the following mixed problem
\begin{align}
\label{hybrid}
 \sum_{K\in\calP} \left[ \langle \bm, T(\bl)\rangle_{\dK} + \langle \bm, \uurm\rangle_{\dK} \right] &= - \sum_{K\in\calP} \langle \bm, \hat T(\ff)\rangle_{\dK} &&\text{for all }\bm\in \bL\,,\\
 \label{hybrid-VVRM}
\sum_{K\in\calP} \langle \bl, \vvrm\rangle_{\dK} &= -\int_{\OO} \ff\cdot\vvrm\dif x &&\text{for all } \vvrm\in \VVRM^{}\,.
\end{align}
%
The formulation \crefrange{e:pel}{hybrid-VVRM} is known as global-local formulation and is equivalent to \cref{elliptic} in a distributional sense as proved in \cite{10.1093/imanum/drac041}.
The hybridization variable $\bl$ represents the traction vector field along the skeleton of $\calP$, i.e.,
\begin{align}\label{lambda}
	\bl = \stress\nn^K\qquad \text{on}\quad \dK\backslash\Gamma_N\quad \text{for all}\quad K\in\calP\,.
\end{align}
%
The MHM method for linear elasticity relies on the discretization of \crefrange{e:pel}{hybrid-VVRM}.

~
\paragraph{MHM method's discrete formulation}

Let $\bL_H$ be a discrete subspace of $\bL$, and $T_h$ and $\hat T_h$ be linear operators that approximate $T$ and $\hat T$.
The discrete version of \cref{hybrid}-\cref{hybrid-VVRM} is to search for $(\bl_H,\uurm_\calH) \in \bL_H\times\VVRM$ such that
\begin{align}
 \sum_{K\in\calP} \left[ \langle \bm_H, T_h(\bl_H)\rangle_{\dK} + \langle \bm_H, \uurm_\calH\rangle_{\dK} \right]
 &= - \sum_{K\in\calP} \langle \bm_H, \hat T_h(\ff)\rangle_{\dK} &\text{for all }\bm_H\in \bL_H\,,
\label{hybrid-h}\\
 \label{hybrid-VVRM-h}
\sum_{K\in\calP} \langle \bl_H, \vvrm\rangle_{\dK} &= -\int_{\OO} \ff\cdot\vvrm\dif x &\text{for all } \vvrm\in \VVRM^{}.
\end{align}
The post-processed discrete displacement solution is
\begin{align}
\label{decompositionUh}
\uu_{Hh} := \uurm_\calH + T_h(\bl_H) + \hat T_h(\ff)\,.
\end{align}
and $\bl_H$ is the discrete traction that approximates $\stress\nn^K$ on $\dK\backslash\Gamma_N$ for all $K\in\calP$.
In the sequel, we define $\bL_H$, $T_h$ and $\hat T_h$ so that \cref{hybrid-h}-\cref{hybrid-VVRM-h} is well-posed and $(\bl_H,\uurm_\calH)$ approximates $(\bl,\uurm)$, the solution of \cref{hybrid}-\cref{hybrid-VVRM}.

\begin{remark}
\Cref{e:pel,e:pef} are classical weak formulations for
\begin{align*}
	- \Div\left(\fStress(\,T(\bm)\,)\right) &= R_{\bm} \;\text{ in }\; K
	&\text{and}\quad
	&\fStress(\,T(\bm)\,)\,\nn^K = \bm \;\text{ on }\; \dK\,,
\end{align*}
and
\begin{align*}
	- \Div\left(\fStress(\,\hat T(\ff)\,)\right) &= \Pi_{RM}(\ff) \;\text{ in }\; K
	&\text{and}\quad
	&\fStress(\,\hat T(\ff)\,)\,\nn^K = \bg \;\text{ on }\; \dK \cap \Gamma_N\,,\\
	&&&\fStress(\,\hat T(\ff)\,)\,\nn^K = \mathbf 0 \;\text{ on }\; \dK\backslash\Gamma_N\,,
\end{align*}
respectively, where $R_{\bm}$ is the unique function in $\VVRM(K)$ satisfying $\int_{K} R_{\bm}\cdot\vvrm\dif x = - \langle \bm, \vvrm\rangle_{\dK}$ for all $\vvrm \in \VVRM(K)$.
These are pure traction elasticity problems similar to \cref{elliptic} with the additional condition $T(\bm)\sVert[0]_K, \hat{T}(\bq)\sVert[0]_K \in \tildeVV(K)$; therefore, they admit a global-local formulation if we repeat the procedure on a partition $\calP_K$ of $K$.
Thus, one may use the MHM method to build multi-level algorithms.
\end{remark}

\begin{remark}
\Cref{hybrid-VVRM-h} states the local equilibrium of the elastic body since $\bl_H$ is the discrete traction field on $\dK$.
\end{remark}

\begin{remark}
The mappings $T_h$ and $\hat T_h$ can be defined quite generally. The choice depends on which unknown one wants to approximate accurately and impacts the robustness of the method.
For instance, to ensure that the method yields a numerical stress tensor $\fStress(\uu_{Hh}) \in H(\boldsymbol{\operatorname{div}}; \OO)$, one may use a stress mixed finite element method to approximate \cref{e:pel,e:pef} \cite{DevFarGomPerSanVal19}.
The continuous Galerkin method with piecewise polynomials on triangular meshes was used in \cite{10.1093/imanum/drac041} to discretize \cref{e:pel,e:pef} and obtain $T_h$ and $\hat T_h$.
The operators $T_h$ and $\hat T_h$ for this work are defined in \cref{sec:GaLS}.
\end{remark}

\section{Locking-free finite elements for the MHM method}
\label{sec:lockingFree}

The MHM method uses a multi-level discretization starting from the first-level partition $\calP$.
In this work, we specify the partitions and spaces used in a two-level version of the method.

\subsection{Preliminaries}

Without loss of generality, we shall use hereafter the terminology employed for three-dimensional domains.
Hereafter, $h_D^{} := \sup_{x,y\in D}^{}|x-y|$ is the diameter of an arbitrary bounded set $D \subset \RR^n$, $n\in\NN$.
The  radius of the largest inscribed ball in $D$ reads $\rho_D$, and the shape regularity of $D$ is denoted by $\sigma_D^{} := h_D^{} / \rho_D^{}$.

Let $\calE$ be the set of the faces in $\calP$, and $\left\{\calE_H^{}\right\}_{H>0}^{}$ be a family of simplicial conformal partitions of $\calE$.
For each $K \in \calP$, let $\left\{\calT_h^{K}\right\}_{h>0}^{}$ be a shape-regular family of local simplicial conformal partitions of $K$, and define $\calT_h^{} := \cup_{K\in\calP}^{} \calT_h^{K}$ for each $h > 0$.
Finally, we state the three characteristic sizes used recurrently hereafter:
\begin{align}
\calH := \max_{K\in\calP}^{} h_K^{}\,,\qquad
H := \max_{F \in \calE_H^{}} h_{F}\,,\qquad
h := \max_{\tau \in \calT_h^{}} h_{\tau}\,.
\end{align}
See \cref{dom2d} for an illustration.
Note that $\calP$ and $\calT_h^{}$ can be nonconformal, which brings versatility in the choice of both global and local partitions.
 
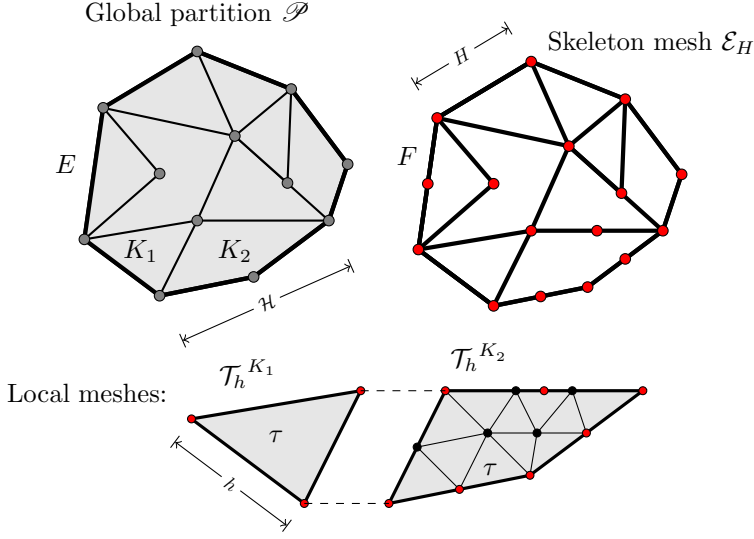
\begin{figure}[!htb]
\centering
\begin{tikzpicture}
	\node (A) at (0,0) {
%
%
\begin{tikzpicture}[scale=0.5]
\coordinate (A1) at (0.50,-1.50);
\coordinate (A2) at (-1.50,0);
\coordinate (A3) at (-1.0,3.50);
\coordinate (A4) at (1.50,5);
\coordinate (A5) at (4,4);
\coordinate (A6) at (5.50,2);
\coordinate (A7) at (5.0,0.50);
\coordinate (A8) at (3,-1.0);
\coordinate (A9) at (-2,2);
\coordinate (A10) at (2.5,-0.3);
\coordinate (A11) at (1.5,6);
\coordinate (A12) at (0,-0.3);
\coordinate (B1) at (0.50,1.75);
\coordinate (B2) at (2.50,2.75);
\coordinate (B3) at (1.50,0.50);
\coordinate (B4) at (3.9,1.50);
\draw[fill=gray!20,  ultra thick ] (A1) -- (A2) -- (A3) -- (A4) --
(A5) -- (A6) -- (A7) -- (A8) -- cycle ;
%
%
\draw[fill=gray!20,  thick ] (B1) -- (A3) -- (A2) -- cycle ;
\draw[fill=gray!20,  thick ] (A3) -- (B2) -- (A4) -- cycle ;
%
%
\draw[fill=gray!20,  thick ] (B3) -- (A7) -- (B2) -- cycle ;
%
%
\draw[fill=gray!20,  thick ] (A2) -- (A1) -- (B3) -- cycle ;
\draw[fill=gray!20,  thick ] (B4) -- (A5) -- (B2) -- cycle ;
\draw[black,  ultra thick ] (A1) -- (A2) -- (A3) -- (A4) -- (A5) --
(A6) -- (A7) -- (A8) -- cycle ;
\draw[fill=gray] (A1) circle (4.0 pt);
\draw[fill=gray] (A2) circle (4.0 pt);
\draw[fill=gray] (A3) circle (4.0 pt);
\draw[fill=gray] (A4) circle (4.0 pt);
\draw[fill=gray] (A5) circle (4.0 pt);
\draw[fill=gray] (A6) circle (4.0 pt);
\draw[fill=gray] (A7) circle (4.0 pt);
\draw[fill=gray] (A8) circle (4.0 pt);
%
\draw[fill=gray] (B1) circle (4.0 pt);
\draw[fill=gray] (B2) circle (4.0 pt);
\draw[fill=gray] (B3) circle (4.0 pt);
\draw[fill=gray] (B4) circle (4.0 pt);
%
\node[rotate=0 ] at (A9) {$E$};
\node[rotate=0 ] at (A12) {$K_1$};
\node[rotate=0 ] at (A10) {$K_2$};
\node[rotate=0 ] at (A11) {Global partition $\calP$};
\draw[|<->|, xshift=0.1cm,yshift=-0.4cm] 
    ($ (A1) + (.6,-1.2) $) -- ($ (A7) + (.6,-1.2) $) node[fill=white, midway,sloped,scale=0.025cm] {$\mathcal{H}$} ;
%
\end{tikzpicture}	
	};
	\node (B) at (5,0) {
%
%
\begin{tikzpicture}[scale=0.5]
\coordinate (A1) at (0.50,-1.50);
\coordinate (A2) at (-1.50,0);
\coordinate (A3) at (-1.0,3.50);
\coordinate (A4) at (1.50,5);
\coordinate (A5) at (4,4);
\coordinate (A6) at (5.50,2);
\coordinate (A7) at (5.0,0.50);
\coordinate (A8) at (3,-1.0);
\coordinate (A9) at (-1.8,2.5);
\coordinate (A10) at (2.5,-0.3);
\coordinate (A11) at (4.7,5.5);
\coordinate (B1) at (0.50,1.75);
\coordinate (B2) at (2.50,2.75);
\coordinate (B3) at (1.50,0.50);
\coordinate (B4) at (3.9,1.50);
\coordinate (D1) at (4,-0.25);
\coordinate (D2) at (3.25,0.50);
\coordinate (D4) at (1.75,-1.25);
\coordinate (D5) at (-1.25,1.75);
\draw[  ultra thick ] (A1) -- (A2) -- (A3) -- (A4) --
(A5) -- (A6) -- (A7) -- (A8) -- cycle ;
%
%
\draw[  ultra thick ] (B1) -- (A3) -- (A2) -- cycle ;
\draw[  ultra thick ] (A3) -- (B2) -- (A4) -- cycle ;
%
%
\draw[  ultra thick ] (B3) -- (A7) -- (B2) -- cycle ;
%
%
\draw[  ultra thick ] (A2) -- (A1) -- (B3) -- cycle ;
\draw[  ultra thick ] (B4) -- (A5) -- (B2) -- cycle ;
\draw[black,  ultra thick ] (A1) -- (A2) -- (A3) -- (A4) -- (A5) --
(A6) -- (A7) -- (A8) -- cycle ;
\draw[fill=red] (A1) circle (4.0 pt);
\draw[fill=red] (A2) circle (4.0 pt);
\draw[fill=red] (A3) circle (4.0 pt);
\draw[fill=red] (A4) circle (4.0 pt);
\draw[fill=red] (A5) circle (4.0 pt);
\draw[fill=red] (A6) circle (4.0 pt);
\draw[fill=red] (A7) circle (4.0 pt);
\draw[fill=red] (A8) circle (4.0 pt);
%
\draw[fill=red] (B1) circle (4.0 pt);
\draw[fill=red] (B2) circle (4.0 pt);
\draw[fill=red] (B3) circle (4.0 pt);
\draw[fill=red] (B4) circle (4.0 pt);
\draw[fill=red] (D1) circle (4.0 pt);
\draw[fill=red] (D2) circle (4.0 pt);
\draw[fill=red] (D4) circle (4.0 pt);
\draw[fill=red] (D5) circle (4.0 pt);
%
\node[rotate=0 ] at (A9) {$F$};
\node[rotate=0 ] at (A11) {Skeleton mesh $\edgeH$};
\draw[|<->|, xshift=0.1cm,yshift=-0.4cm] 
    ($ (A3) + (-.6,.9) $) -- ($ (A4) + (-.6,.9) $) node[fill=white, midway,sloped,scale=0.025cm] {$H$} ;
\end{tikzpicture}	
	};
	\node (D) at (2.8,-3.5) {
%
%
\begin{tikzpicture}[scale=.75]

\coordinate (R1) at (-1.0,-1.50);
\coordinate (R2) at (-3,0);
\coordinate (R12) at (-1.5,-0.3);
\coordinate (R3) at (0.0,0.50);
\draw[fill=gray!20,  very thick ] (R1) -- (R2) -- (R3) -- cycle ;
\draw[fill=red] (R1) circle (2.0 pt);
\draw[fill=red] (R2) circle (2.0 pt);
\draw[fill=red] (R3) circle (2.0 pt);
\node[rotate=0 ] at (-2.0,.8) {$\trih^{K_1}$};
\node[rotate=0 ] at (R12) {$\tau$};
\draw[|<->|, xshift=0.1cm,yshift=-0.4cm] 
    ($ (R2) + (-.3,-.4) $) -- ($ (R1) + (-.3,-.4) $) node[fill=white, midway,sloped,scale=0.025cm] {$h$} ;

\coordinate (A7) at (5.0,0.50);
\coordinate (A8) at (3,-1.0);
\coordinate (A1) at (0.50,-1.50);
\coordinate (B3) at (1.50,0.50);
\coordinate (D1) at (5.0,0.50);
\coordinate (D2) at (3,-1.0);
\coordinate (D3) at (0.50,-1.50);
\coordinate (D4) at (1.50,0.50);
\coordinate (D5) at (1.75,-1.25);
\coordinate (D6) at (1,-0.5);
\coordinate (D7) at (2.25,-0.25);
\coordinate (D8) at (3.25,0.50);
\coordinate (D9) at (4.0,-.25);
\coordinate (D10) at (3.125,-.25);
\coordinate (D11) at (1.625,-0.375);

\draw[fill=gray!20,  very thick ] (A7) -- (A8) -- (A1) -- (B3) -- cycle ;
\draw[fill=gray!20,  thin ]  (D7)  -- (2.75,0.50) ;
\draw[fill=gray!20,  thin ]  (D7)  -- (D9) ;
\draw[fill=gray!20,  thin ]  (D7)  -- (A8) ;
\draw[fill=gray!20,  thin ]  (D7)  -- (B3) ;
\draw[fill=gray!20,  thin ]  (D7)  -- (D6) ;
\draw[fill=gray!20,  thin ]  (D7)  -- (D5) ;
\draw[fill=gray!20,  thin ]  (3.75,0.50)  -- (D9) ;
\draw[fill=gray!20,  thin ]  (D5)  -- (D6) ;
\draw[fill=gray!20,  thin ]  (D10)  -- (A8) ;
\draw[fill=gray!20,  thin ]  (D10)  -- (2.75,0.50) ;
\draw[fill=gray!20,  thin ]  (D10)  -- (3.75,0.50) ;
%

%
\draw[fill=red] (D1) circle (2.0 pt);
\draw[fill=red] (D2) circle (2.0 pt);
\draw[fill=red] (D3) circle (2.0 pt);
\draw[fill=red] (D4) circle (2.0 pt);
\draw[fill=red] (D5) circle (2.0 pt);
\draw[fill=black] (D6) circle (2.0 pt);
\draw[fill=black] (D7) circle (2.0 pt);
\draw[fill=red] (D8) circle (2.0 pt);
\draw[fill=red] (D9) circle (2.0 pt);
\draw[fill=black] (D10) circle (2.0 pt);
%
\draw[fill=black] (2.75,0.50) circle (2.0 pt);
\draw[fill=black] (3.75,0.50) circle (2.0 pt);
%
\node[rotate=0 ] at (2.10,1.10) {$\trih^{K_2^{}}$};
\node[rotate=0 ] at (2.3,-0.9) {$\tau$};

\draw[dashed] (R1) -- (A1);
\draw[dashed] (R3) -- (B3);

\end{tikzpicture}	
	};
	\node (E) at (-1.5,-3) {Local meshes:};
\end{tikzpicture}
\caption{A two-dimensional polygon partitioned by the meshes $\calP$ and $\calT_h^{}$.
The fine-scale meshes, $\trih^{K_1}$ and $\trih^{K_2}$, are defined over $K_1, K_2 \in \calP$, respectively.
Elements $K_1$ and $K_2$ belong to $\calP$; faces $E$ and $F$ are in $\calE$ and $\calE_H^{}$, respectively; the simplexes $\tau$ belong to the affine mesh $\calT_h^{}$.}
\label{dom2d}
\end{figure}

On each $K \in \calP$, we define the discrete spaces
\begin{align*}
\VV_h^{}(K) &:=\{ \vv_h^{}\in C^0(K)^d \,:\, \vv_h^{}\sVert[0]_{\tau}^{}\in\PP_k^{}(\tau)^d\,,\quad\forall\, \tau\in \calT_h^K\}\,,\\
\tildeVV_h^{}(K) &:= \VV_h(K) \cap \tildeVV^{}(K)\,,\\
  Q_h(K) &:=\{ q_h\in C^0(K)\,:\, q_h\sVert[0]_{\tau}\in\PP_k(\tau)\,,\quad\forall\, \tau\in \calT_h^K\}\,,\\
  \bL_H(K) &:=\{ \bm_H\in \LL^2(\dK)\,:\, \bm_H\sVert[0]_{F}\in\PP_\ell(F)\,,\quad\forall\, F\in \calE_H^K\}\,.
\end{align*}
where $k,\ell \in \NN^+$ and $\PP_s(D)$ is the space of polynomials of degree less or equal to $s$ on $D$.
The corresponding global spaces are
\begin{equation*}
\begin{aligned}
\VV_h^{} &:= \{\vv_h\in \LL^2(\OO)\,:\, \, \vv_h\sVert[0]_K\in \VV_h(K)\,,\quad\forall\, K\in \calP\}\,,\\
\tildeVV_h^{} &:= \VV_h \cap \tildeVV^{}\,,\\
  Q_h&:= \{q_h\in L^2(\OO)\,:\, q_h\sVert[0]_K\in Q_h(K)\,,\quad\forall\, K\in \calP\}\,,\\
\bL_H^{} &:= \{\bm_H\in \bL\,:\, \, \bm_H\sVert[0]_{\dK}\in \bL_H(K)\,,\quad\forall\, K\in \calP\}\,.
\end{aligned}
\end{equation*}

The following sections use Poincaré and Korn's inequalities on $K\in\calP$.
It is well-known (see \cite{ernGuermondFEM} for instance) that there exists a positive constant $C_{P,K}$ such that
\begin{align}
\label{eq:poincare}
\|\vv\|_{0,K} \leqslant C_{P,K}\,h_K^{}\,|\vv|_{1,K} \quad \text{for all }\, \vv \in \HH^1(K) \cap \LL^2_0(\OO) \subset \tildeVV(K)\,.
\end{align}
The Korn's inequality on the space $\tildeVV(K)$ was proved in \cite{10.1093/imanum/drac041}, i.e., there exists a positive constant $C_{korn,K}$, independent of $h_K$, such that
\begin{align}
\label{eq:korn2ndlocal}
|\tilde\vv|_{1,K} \leqslant C_{korn,K} \|\EE(\tilde\vv)\|_{0,K}\quad\text{for all }\tilde\vv \in \tildeVV(K)\,.
\end{align}
Finally, we use the following inverse inequality in the sequel
\begin{align}\label{eq:ineqCI}
C_I \sum_{\tau\in\calT_h^K} h_\tau^2 \left(\frac{1}{h_K^2}\|\svh\|_{0,\tau}^2 + \|\bdiv\,\svh\|_{0,\tau}^2\right)
\leqslant \|\svh\|_{0,K}^2\,,
\end{align}
for all $\vv_h \in \VV_h(K)$.
The constant $C_I$ depends only the polynomial order $k$ and $d$
using standard arguments, e.g., \cite[Lemma 1.138]{ernGuermondFEM}.
Hereafter, we use $C$, $C_1, C_2, \cdots$, for various positive constants which do not depend on $H$ or $h$, and do not degenerate when the Poisson ratio approaches $1/2$.

\begin{remark}
For star-shaped elements, the constant $C_{P,K}$ in \cref{eq:poincare} depends only on $d$ and the shape of $K$. See \cite{VeeVer12,di2017hybrid,ZheQui05} for different works on this topic.
\end{remark}

\subsection{The Galerkin Least Squares formulation}
\label{sec:GaLS}

Let $(T_h,T^p_h) : \bL \rightarrow \tildeVV_h\times Q_h$ and $(\hat T_h,\hat T^p_h) : \LL^2(\Omega) \rightarrow \tildeVV_h\times Q_h$ be linear operators defined by the following rule:
for each $\bm\in \bL$, $\bq\in \LL^2(\Omega)$, $(T_h,T^p_h)(\bm)$ and $(\hat T_h,\hat T^p_h)(\bq)$ satisfy
\begin{align}
\label{ThGaLS}
B_K(T_h(\bm), T^p_h(\bm); \tilde\vv_h, q_h) &= \langle \bm,\tilde\vv_h \rangle_{\dK}\,,\\ 
\label{hThGaLS}
B_K(\hat T_h(\bq), \hat T^p_h(\bq); \tilde\vv_h, q_h) &= F_{K}(\bq;\tilde\vv_h, q_h)\,, 
\end{align}
for all $(\tilde\vv_h, q_h) \in \tildeVV_h(K) \times Q_h(K)$ and $K\in\triH$, where
\begin{align}
\label{eq:operatorBh}
B_K(\uu, p; \vv, q) &:= 
\int_K \left(2\,G\,\EE(\uu):\EE(\vv) - p\,(\Div\,\vv) - (\Div\,\uu)\,q - \upepsilon\, p\,q \right) \dif x\nonumber\\
& - \alpha_K\,\sum_{\tau\in\calT_h^K} h_\tau^2\, \int_\tau \left(\bdiv\,(2\,G\,\EE(\uu) - p\id_d) \,\cdot\, \bdiv\,(2\,G\,\EE(\vv) - q\id_d)\right) \dif x\,,\\
F_{K}(\bq;\vv, q)  &:= 
\int_K \bq\cdot\vv \dif x + \int_{\dK \cap \Gamma_N} \bg\cdot\vv \dif s\nonumber\\
& + \alpha_K\,\sum_{\tau\in\calT_h^K} h_\tau^2\, \int_\tau \left( \bq \,\cdot\, \bdiv\,(2\,G\,\EE(\vv) - q\id_d) \right) \dif x\,.
\label{eq:operatorFK}
\end{align}
We choose the stabilization parameters $\alpha_K$ in the interval
\begin{align}\label{eq:alphaHyp}
0 < \alpha_K < \frac{G_{0,K}}{2\,\|G\|_{1,\infty,\calT_h^K}^2}\, C_I\,,
\end{align}
where $G_{0,K} := \ess\inf_{\xx\in K}G(\xx)$, and
\begin{align}\label{eq:normW1infty}
\|G\|_{1,\infty,\calT_h^K} := \max_{\tau \in \calT_h^K} \sqrt{\|G\|_{L^\infty(\tau)}^2 + h_K^2\|\nabla G\|_{L^\infty(\tau)}^2}\,,
\end{align}
Notice that the Least Squares terms in \cref{eq:operatorBh,eq:operatorFK} naturally induces the additional regularity over $G$, i.e., $G \in W^{1,\infty}(\calT_h) := \{v \in L^\infty(\OO) \,:\, v\sVert[0]_\tau \in W^{1,\infty}(\tau)\,,\quad\forall\, \tau\in \calT_h\}\,.$

The operators \cref{eq:operatorBh}-\cref{eq:operatorFK} are inspired in the Galerkin Least Squares (GaLS) method from \cite{stabilizedElasticity}.
The former work was proposed for elasticity problems with constant coefficients $G$ and $\nu$, and mixed Dirichlet-Neumann boundary.
Since the local problems \cref{e:pel}-\cref{e:pef} have pure Neumann boundary and heterogeneous elastic properties, we dedicate \cref{sec:wellPosednessTph} to adapt some results from \cite{stabilizedElasticity}.
%
%
%

~
\paragraph{Summary}

The MHM method with finite elements based on the GaLS formulation reads as follows:
Find the solution $(\bl_H,\uurm_\calH) \in \bL_H\times \VVRM^{}$ of \cref{hybrid-h}-\cref{hybrid-VVRM-h}, such that $(T_h,T^p_h)$ and $(\hat T_h,\hat T^p_h)$ satisfy \cref{ThGaLS,eq:operatorFK}.
This method provides the approximation for $\uu$ and $\stress\nn^K\sVert[0]_{\dK}$, equations \cref{decompositionUh,lambda}, and for $p$ and $\stress$ as follows
\begin{align}\label{uh-ph}
p_{Hh} &:=\, T_h^p(\bl_H) + \hat T_h^p\,(\ff)\,,\qquad
\stress_{Hh} := 2\,G\,\EE(\uu_{Hh})-p_{Hh}\,\id_d\,.
\end{align}

\begin{remark}
We can use $\tilde\vv_h=\mathbf{0}$ and $q_h = 1$ in \cref{ThGaLS} and \cref{hThGaLS} to conclude the solution pair $(\uu_{Hh}, p_{Hh})$ satisfies the following local compressibility constraint
\begin{align}\label{rem:equilibrium}
\int_K \left(\Div\,\uu_{Hh} + \upepsilon\, p_{Hh}\right)\dif x = 0
 &&\text{for every } \,K\in \calP\,.
\end{align}
\end{remark}

\subsection{Well-posedness of the local formulation}
\label{sec:wellPosednessTph}

The following results adapts the proofs in \cite{stabilizedElasticity} to pure Neumann elasticity problems with heterogeneous coefficients $G$ and $\nu$.
Also, we introduce the $\upepsilon$-norm in $L^2(K)$ as
\begin{align}\label{epsNorm}
	\|q\|_{\upepsilon,K} := \left( \int_K (1+\upepsilon)\,q^2 \dif x \right)^{\frac12}\,,
	\quad \forall q \in L^2(K)\,,
\end{align}
and the $h$-seminorm in $H^1(\calT_h^K) := \{v \in L^2(K) \,:\, v\sVert[0]_\tau \in H^1(\tau)\,,\;\forall\, \tau\in \calT_h^K\}$ as
\begin{align}\label{hNorm}
	|q|_{h,K} := \left(\sum_{\tau\in\calT_h^K} h_\tau^2 \|\nabla q\|_{0,\tau}^2\right)^{\frac12}\,,
	\qquad \forall q \in H^1(\calT_h^K)\,.
\end{align}
We suppress using the sub-index $h$ to shorten formulas inside the proofs.

The next result addresses the continuity of $B_K$ \cref{hThGaLS} in $\tildeVV_h(K) \times Q_h(K)$.

\begin{lemma}[Boundness of $B_K$]\label{boundnessLemma}
There is a positive constant $C$ such that, for all $(\uu_h,p_h), (\vv_h,q_h)\in \tildeVV_h(K) \times Q_h(K)$, it holds
\begin{align}
	B_K(\uu_h,p_h;\vv_h,q_h) \leqslant C\, (\|\EE(\uu_h)\|_{0,K}^2+\|p_h\|_{\upepsilon,K}^2)^\frac12 (\|\EE(\vv_h)\|_{0,K}^2+\|q_h\|_{\upepsilon,K}^2)^\frac12.
\end{align}
\end{lemma}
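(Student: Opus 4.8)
The plan is to estimate each of the three contributions to $B_K$ separately—the bilinear elasticity term, the mixed pressure–divergence couplings, and the least-squares stabilization term—and then combine them using the Cauchy–Schwarz inequality in the product space. First I would bound the bulk term $\int_K 2\,G\,\EE(\uu_h):\EE(\vv_h)\dif x$ by $2\,\|G\|_{L^\infty(K)}\,\|\EE(\uu_h)\|_{0,K}\,\|\EE(\vv_h)\|_{0,K}$. For the cross terms $-\int_K p_h\,(\Div\vv_h)\dif x$ and $-\int_K (\Div\uu_h)\,q_h\dif x$, I would use $|\Div\vv_h| \le \sqrt{d}\,|\EE(\vv_h)|$ pointwise (since the divergence is the trace of the strain) together with $\|q_h\|_{0,K} \le \|q_h\|_{\upepsilon,K}$ to get bounds of the form $C\,\|p_h\|_{\upepsilon,K}\,\|\EE(\vv_h)\|_{0,K}$. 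The term $-\int_K \upepsilon\,p_h\,q_h\dif x$ is controlled directly by $\|p_h\|_{\upepsilon,K}\,\|q_h\|_{\upepsilon,K}$ since $\upepsilon\,p_h\,q_h \le (1+\upepsilon)^{1/2}p_h\,(1+\upepsilon)^{1/2}q_h$ pointwise.

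The more delicate piece is the least-squares term $\alpha_K\sum_{\tau} h_\tau^2\int_\tau \bdiv(2\,G\,\EE(\uu_h)-p_h\id_d)\cdot\bdiv(2\,G\,\EE(\vv_h)-q_h\id_d)\dif x$. Here I would first apply Cauchy–Schwarz in $L^2(\tau)$ and then over $\tau$, reducing the matter to bounding the quantity $\sum_\tau h_\tau^2\,\|\bdiv(2\,G\,\EE(\vv_h)-q_h\id_d)\|_{0,\tau}^2$ by a constant multiple of $\|\EE(\vv_h)\|_{0,K}^2 + \|q_h\|_{\upepsilon,K}^2$ (and symmetrically for $\uu_h,p_h$). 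I would expand $\bdiv(2\,G\,\EE(\vv_h)) = 2\,G\,\bdiv\,\EE(\vv_h) + 2\,(\nabla G)\cdot\EE(\vv_h)$, using that $G \in W^{1,\infty}(\calT_h^K)$ so this makes sense on each $\tau$. The term with $\bdiv\,\EE(\vv_h)$ and the term $\bdiv(q_h\id_d) = \nabla q_h$ are then absorbed by the inverse inequality \cref{eq:ineqCI} (applied to the strain part) and by the $h$-seminorm $|q_h|_{h,K}$ which in turn is controlled by $\|q_h\|_{0,K}$ via another standard inverse estimate on polynomials; the term $2\,(\nabla G)\cdot\EE(\vv_h)$ contributes $h_\tau^2\|\nabla G\|_{L^\infty(\tau)}^2\|\EE(\vv_h)\|_{0,\tau}^2$, which is bounded because $h_\tau \le h_K$ and the definition \cref{eq:normW1infty} of $\|G\|_{1,\infty,\calT_h^K}$ packages exactly this factor. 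Summing over $\tau$ and using $\|G\|_{1,\infty,\calT_h^K}$ as the controlling constant gives the desired bound with a constant depending on $C_I^{-1}$, $\|G\|_{1,\infty,\calT_h^K}$, $d$, and $k$, but crucially independent of $h$ and non-degenerate as $\nu \to 1/2$.

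Finally I would collect all the pieces. Each term has been bounded by a product of the form (something controlled by $(\|\EE(\uu_h)\|_{0,K}^2 + \|p_h\|_{\upepsilon,K}^2)^{1/2}$) times (the analogous quantity for $\vv_h, q_h$); applying the discrete Cauchy–Schwarz inequality to sum these four (or five) contributions yields the stated estimate, with $C$ absorbing $\|G\|_{1,\infty,\calT_h^K}$, $d$, $k$, and $\alpha_K$ (which by \cref{eq:alphaHyp} is itself bounded in terms of $G_{0,K}^{-1}$, $\|G\|_{1,\infty,\calT_h^K}$, and $C_I$).

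The main obstacle I anticipate is the bookkeeping in the least-squares term: one must carefully track the $h_\tau$ powers and the $W^{1,\infty}$ norm of $G$ so that the inverse inequality \cref{eq:ineqCI}—which involves the factor $h_\tau^2/h_K^2$ on the strain part but only $h_\tau^2$ on the divergence part—matches up with the $h_\tau^2$ weight in the stabilization and with the $h_K^2$ scaling hidden inside $\|G\|_{1,\infty,\calT_h^K}$. Everything else is routine Cauchy–Schwarz plus pointwise algebraic inequalities for symmetric matrices.
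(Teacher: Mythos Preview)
Your proposal is correct and follows essentially the same approach as the paper: Cauchy--Schwarz on each term of $B_K$, the pointwise bound $|\Div\vv_h|\le\sqrt d\,|\EE(\vv_h)|$, and control of the least-squares term by expanding $\bdiv(2G\,\EE(\vv_h))$ and invoking the inverse inequality~\eqref{eq:ineqCI} together with the $W^{1,\infty}$ norm~\eqref{eq:normW1infty} of $G$. The only organizational difference is that the paper first applies a single Cauchy--Schwarz to obtain a symmetric product $N_K(\uu_h,p_h)\,N_K(\vv_h,q_h)$ and then bounds $N_K$ via the auxiliary \cref{th:auxiliarAlphaLemma2} (which packages exactly the argument you sketched for the stabilization term), whereas you bound term by term and collect with a discrete Cauchy--Schwarz at the end.
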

\begin{proof}
We begin applying Cauchy-Schwarz inequalities in \cref{eq:operatorBh} to obtain
\begin{align*}
B_K(\uu_h,p_h;\vv_h,q_h) &\leqslant
N_K(\uu_h,p_h)\,N_K(\vv_h,q_h)\,,
\end{align*}
for $(\uu_h,p_h), (\vv_h,q_h)\in \tildeVV_h(K) \times Q_h(K)$,
where $N_K : \tildeVV_h(K) \times Q_h(K) \to \RR$ satisfies
\begin{multline*}
N_K(\uu_h,p_h)^2 := \|2G\|_{L^\infty(K)}\|\EE(\uu_h)\|_{0,K}^2
+\|p_h\|_{0,K}^2
+\|\sqrt\upepsilon p_h\|_{0,K}^2
+\|\Div\,\uu_h\|_{0,K}^2\\
+\alpha_K\sum_{\tau\in\calT_h^K} h_\tau^2\|\bdiv\,(2G\,\EE(\uu_h)-p_h\,I_d)\|_{0,\tau}^2\,.
\end{multline*}
Using \cref{th:auxiliarAlphaLemma2}, we obtain
$$
N_K(\uu_h,p_h)^2 \leqslant \left(\|2G\|_{L^\infty(K)} + d + 4 G_{0,K}\right)\|\EE(\uu_h)\|_{0,K}^2	
+ \|p_h\|_{\upepsilon,K}^2 + 4 G_{0,K} C_{0,K} \|p_h\|_{0,K}^2
$$
and the analogous expression for $N_K(\vv_h,q_h)^2$, so that the main result follows.
\end{proof}


Using \cref{preStabilityLemma}, we prove the stability of $B_K$.

\begin{lemma}\label{stabilityLemma}
There is a positive constant $C$ such that, for all $(\tilde\uu_h,p_h)\in \tildeVV_h(K)\times Q_h(K)$, there exists a pair $(\tilde\vv_h,q_h) \in\tildeVV_h(K)\times Q_h(K)$ satisfying
\begin{align}
\frac{B_K(\tilde\uu_h,p_h;\tilde\vv_h,q_h)}{(\|\EE(\tilde\vv_h)\|_{0,K}^2+\|q_h\|_{\upepsilon,K}^2)^{1/2}} \geqslant C (\|\EE(\tilde\uu_h)\|_{0,K}^2+\|p_h\|_{\upepsilon,K}^2)^{1/2}\,.
\end{align}
\end{lemma}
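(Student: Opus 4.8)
The plan is to verify the discrete inf--sup condition by the classical combined‑test‑function device (Boffi--Brezzi--Fortin), exploiting that the least‑squares stabilization supplies the pressure‑gradient control that the equal‑order pair $(\tildeVV_h(K),Q_h(K))$ would otherwise lack. Given $(\tilde\uu_h,p_h)\in\tildeVV_h(K)\times Q_h(K)$, I would build the test pair in the form $(\tilde\vv_h,q_h)=(\tilde\uu_h+\delta\,\tilde\ww_h,\,-p_h)$, where $\delta>0$ is a small constant fixed at the very end and $\tilde\ww_h\in\tildeVV_h(K)$ is the auxiliary field coming from \cref{preStabilityLemma}. The output will be obtained by bounding $B_K(\tilde\uu_h,p_h;\tilde\vv_h,q_h)$ from below and the denominator $(\|\EE(\tilde\vv_h)\|_{0,K}^2+\|q_h\|_{\upepsilon,K}^2)^{1/2}$ from above.

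\emph{Step 1 (partial coercivity with $(\tilde\uu_h,-p_h)$).} In $B_K(\tilde\uu_h,p_h;\tilde\uu_h,-p_h)$ the two mixed pressure terms $-p_h\,\Div\tilde\uu_h$ and $+p_h\,\Div\tilde\uu_h$ cancel, the last term of $\cref{eq:operatorBh}$ becomes $\|\sqrt{\upepsilon}\,p_h\|_{0,K}^2$, and, since $\bdiv(p_h\,\id_d)=\nabla p_h$, the least‑squares term splits as $-\alpha_K\sum_{\tau}h_\tau^2\|\bdiv(2G\,\EE(\tilde\uu_h))\|_{0,\tau}^2+\alpha_K\,|p_h|_{h,K}^2$. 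Writing $\bdiv(2G\,\EE(\tilde\uu_h))=2G\,\bdiv\EE(\tilde\uu_h)+2\nabla G\cdot\EE(\tilde\uu_h)$, a Cauchy--Schwarz estimate against the weight defining $\cref{eq:normW1infty}$ followed by the inverse inequality $\cref{eq:ineqCI}$ gives $\alpha_K\sum_{\tau}h_\tau^2\|\bdiv(2G\,\EE(\tilde\uu_h))\|_{0,\tau}^2\le 4\,\alpha_K\,C_I^{-1}\,\|G\|_{1,\infty,\calT_h^K}^2\,\|\EE(\tilde\uu_h)\|_{0,K}^2$, which by the choice $\cref{eq:alphaHyp}$ of $\alpha_K$ is strictly smaller than $2\,G_{0,K}\,\|\EE(\tilde\uu_h)\|_{0,K}^2\le\int_K 2G\,|\EE(\tilde\uu_h)|^2\dif x$. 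Hence there is $\kappa_K>0$, independent of $H$, $h$ and $\upepsilon$, with $B_K(\tilde\uu_h,p_h;\tilde\uu_h,-p_h)\ge \kappa_K\,\|\EE(\tilde\uu_h)\|_{0,K}^2+\|\sqrt{\upepsilon}\,p_h\|_{0,K}^2+\alpha_K\,|p_h|_{h,K}^2$.

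\emph{Step 2 (pressure control) and combination.} Here I invoke \cref{preStabilityLemma} to obtain $\tilde\ww_h\in\tildeVV_h(K)$ with $\|\EE(\tilde\ww_h)\|_{0,K}\le C\,\|p_h\|_{0,K}$ and $-\int_K p_h\,\Div\tilde\ww_h\dif x\ge C\,\|p_h\|_{0,K}^2-C\,|p_h|_{h,K}\,\|p_h\|_{0,K}$; this field is produced by solving $\Div\tilde\ww=-p_h$ in $\tildeVV(K)$ (possible because rigid motions are divergence free, so $\Div$ is onto $L^2(K)$ from $\tildeVV(K)$, with $|\tilde\ww|_{1,K}\le C\|p_h\|_{0,K}$ by Korn's inequality $\cref{eq:korn2ndlocal}$), taking $\tilde\ww_h$ a quasi‑interpolant of $\tilde\ww$ (after subtracting its rigid‑motion projection), and integrating $\int_K p_h\,\Div(\tilde\ww-\tilde\ww_h)\dif x$ by parts element by element, the interior‑face contributions cancelling by continuity of $p_h$ and of $\tilde\ww-\tilde\ww_h$. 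Using the boundedness of $B_K$ (\cref{boundnessLemma}) for the remaining two terms, $B_K(\tilde\uu_h,p_h;\tilde\ww_h,0)\ge C\,\|p_h\|_{0,K}^2-C\big(\|\EE(\tilde\uu_h)\|_{0,K}+|p_h|_{h,K}+\|\sqrt{\upepsilon}\,p_h\|_{0,K}\big)\|p_h\|_{0,K}$. Adding $\delta$ times this to the Step~1 estimate and using Young's inequality to absorb the three cross products into $\kappa_K\|\EE(\tilde\uu_h)\|_{0,K}^2$, $\alpha_K|p_h|_{h,K}^2$ and $\|\sqrt{\upepsilon}\,p_h\|_{0,K}^2$, one fixes $\delta>0$ small (depending only on $\kappa_K$, $\alpha_K$ and the constants above, hence not on $H$, $h$ or $\upepsilon$) so that $B_K(\tilde\uu_h,p_h;\tilde\vv_h,q_h)\ge C\big(\|\EE(\tilde\uu_h)\|_{0,K}^2+\|p_h\|_{\upepsilon,K}^2\big)$, where $\|p_h\|_{\upepsilon,K}^2=\|p_h\|_{0,K}^2+\|\sqrt{\upepsilon}\,p_h\|_{0,K}^2$. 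Since $q_h=-p_h$ and $\|\EE(\tilde\vv_h)\|_{0,K}\le\|\EE(\tilde\uu_h)\|_{0,K}+\delta\,C\|p_h\|_{0,K}$, the denominator is bounded by $C\,(\|\EE(\tilde\uu_h)\|_{0,K}^2+\|p_h\|_{\upepsilon,K}^2)^{1/2}$, and dividing yields the asserted inequality.

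\emph{Main obstacle.} I expect the delicate point to be Step~2: constructing $\tilde\ww_h$ and bounding $-\int_K p_h\,\Div\tilde\ww_h\dif x$ from below with constants independent of $\nu$ as $\nu\to 1/2$ (which is exactly why the pressure must be split into $\|\sqrt{\upepsilon}\,p_h\|_{0,K}^2$ and $\|p_h\|_{0,K}^2$). In contrast with the Dirichlet analysis of \cite{stabilizedElasticity}, the test functions live in $\tildeVV_h(K)$ rather than in $\HH_0^1(K)$, so $\tilde\ww$ does not vanish on $\dK$ and the local integration by parts leaves a boundary term on $\dK$; reconciling this while keeping the stability constant uniform is precisely the content of \cref{preStabilityLemma}, and once that lemma is available the combination above is routine.
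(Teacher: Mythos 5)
Your proposal is correct and follows essentially the same route as the paper's proof: partial coercivity with the test pair $(\tilde\uu_h,-p_h)$ using the inverse inequality \cref{eq:ineqCI} and the bound \cref{eq:alphaHyp} on $\alpha_K$, then pressure control via the auxiliary field from \cref{preStabilityLemma} added with a small weight $\delta$, Young's inequality to absorb the cross terms, and a final bound on the norm of the combined test pair (your sign convention $+\delta\tilde\ww_h$ versus the paper's $-\delta\tilde\ww$ is immaterial). The only divergence is your sketched re-derivation of \cref{preStabilityLemma} itself, which is not needed since you may use that lemma as stated.
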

\begin{proof}
Let $\tilde\uu_h \in \tildeVV_h(K)$ and $p_h \in Q_h(K)$. Notice that
\begin{multline*}
B_K(\tilde\uu_h,p_h;\tilde\uu_h,-p_h) = \int_K (2G\,|\EE(\tilde\uu_h)|^2 + \upepsilon\, p_h^2)\dif x\\
- \alpha_K\,\sum_{\tau\in\calT_h^K} h_\tau^2\, (\|\bdiv\,(2\,G\,\EE(\tilde\uu_h))\|_{0,\tau}^2 - \|\nabla p_h\|_{0,\tau}^2 )\,.
\end{multline*}
From \cref{th:auxiliarAlphaLemma}, we obtain
\begin{align*}
\int_K 2G\,|\EE(\tilde\uu_h)|^2\dif x - \alpha_K\,\sum_{\tau\in\calT_h^K} h_\tau^2\, \|\bdiv\,(2\,G\,\EE(\tilde\uu_h))\|_{0,\tau}^2
\geqslant
C_3 \|\EE(\tilde\uu_h)\|_{0,K}^2\,,
\end{align*}
where $C_3 := (2 G_{0,K} - \alpha_K C_I^{-1}) > 0$ using \cref{eq:alphaHyp}. Combine the two expressions above to obtain
\begin{align}\label{stabilityLemma-aux1}
B_K(\tilde\uu_h,p_h;\tilde\uu_h,-p_h) &\geqslant
C_3 \|\EE(\tilde\uu_h)\|_{0,K}^2
+\int_K \upepsilon\, p_h^2 \dif x
+\alpha_K |p_h|_{h,K}^2\,,
\end{align}

Next, let $\tilde\ww\in\tildeVV_h(K)$ be a function for which the supremum of \cref{preStabilityLemma} holds and such that
$\|\tilde\ww\|_{1,K} = \|p_h\|_{0,K}$.
Then, using the bilinearity of $B_K$, \cref{boundnessLemma,preStabilityLemma}, and the Cauchy-Schwarz inequality, we get
\begin{align*}
&B_K(\tilde\uu_h,p_h;-\tilde\ww,0)
= B_K(\tilde\uu_h,0;-\tilde\ww,0) + B_K(0,p_h;-\tilde\ww,0)\\
&\geqslant - C_4 \|\EE(\tilde\uu_h)\|_{0,K} \|\tilde\ww\|_{1,K} + \int_K p_h\,(\Div\,\tilde\ww) \dif x - \alpha_K \sum_{\tau\in\calT_h^K} h_\tau^2\, (\bdiv\,(2\,G\,\EE(\tilde\ww)), \nabla p_h)_\tau\\
&\geqslant - C_4 \|\EE(\tilde\uu_h)\|_{0,K} \|p_h\|_{0,K} + C_1 \|p_h\|_{0,K}^2 - C_2 |p_h|_{h,K} \|p_h\|_{0,K} \\
&\qquad - \sqrt{\alpha_K} \left(\alpha_K\sum_{\tau\in\calT_h^K} h_\tau^2\, \|\bdiv\,(2\,G\,\EE(\tilde\ww))\|_{0,\tau}^2 \right)^{1/2} \left(\sum_{\tau\in\calT_h^K} h_\tau^2\, \|\nabla p_h\|_{0,\tau}^2 \right)^{1/2}\,.
\end{align*}
Now we use \cref{th:auxiliarAlphaLemma}, \cref{eq:alphaHyp}, $\|\tilde\ww\|_{1,K} = \|p_h\|_{0,K}$ and the Cauchy-Schwarz inequality, to obtain
\begin{align}\label{stabilityLemma-aux2}
B_K(\tilde\uu_h,p_h;-\tilde\ww,0)
\geqslant
- C_6 \|\EE(\tilde\uu_h)\|_{0,K}^2 +  C_7 \,\int_K p_h^2 \dif x  - C_8 |p_h|_{h,K}^2\,,
\end{align}
where $C_6 := \frac{C_4}{2\gamma_1}$, $C_6 := C_1 - \frac{C_4\,\gamma_1}{2} - \frac{(C_2 + C_5) \gamma_2}{2}$ and $C_8 := \frac{C_2 + C_5}{2\gamma_2}$, and $\gamma_1$ and $\gamma_2$ are arbitrary positive constants.

Combining \cref{stabilityLemma-aux1,stabilityLemma-aux2}, we can define a constant $C_9$ such that
\begin{align}\label{stabilityLemma-aux3}
B_K(\tilde\uu_h,p_h;\tilde\uu_h-\delta\tilde\ww,-p_h)
&\geqslant C_9 (\|\EE(\tilde\uu_h)\|_{0,K}^2 + \|p_h\|_{\upepsilon,K}^2)\,,
\end{align}
where $0<\delta < \min\left\{\frac{C_3}{C_6}, \frac{\alpha_K}{C_8}\right\}$.
On the other hand, choosing $\delta^2 \leqslant \frac12$, we obtain
\begin{multline}\label{stabilityLemma-aux4}
\|\EE(\tilde\uu_h -\delta \tilde\ww)\|_{0,K}^2 + \|-p_h\|_{\upepsilon,K}^2 \leqslant 2 \|\EE(\tilde\uu_h)\|_{0,K}^2 + 2 \delta^2 \|\EE(\tilde\ww)\|_{0,K}^2 + \|p_h\|_{\upepsilon,K}^2 =\\
= 2 \|\EE(\tilde\uu_h)\|_{0,K}^2 + \int_K (1 + \upepsilon + 2 \delta^2) p_h^2 \dif x \leqslant 2 (\|\EE(\tilde\uu_h)\|_{0,K}^2 + \|p_h\|_{\upepsilon,K}^2)\,,
\end{multline}
The main result follows from \cref{stabilityLemma-aux3,stabilityLemma-aux4} using $\vv_h=\tilde\uu_h-\delta\tilde\ww$, $q_h = -p_h$ and $C = C_9/\sqrt{2}$.
\end{proof}

The boundness of $F_{K}$ can be proved using the Cauchy-Schwarz and  triangle inequalities, and \cref{th:auxiliarAlphaLemma2}.
Therefore,
\cref{boundnessLemma,stabilityLemma} suffice to guarantee the well-posedness of the problems \cref{ThGaLS,hThGaLS} using the Banach-Ne\v{c}as-Babu\v{s}ka (BNB) Theorem \cite[Theorem~2.6]{ernGuermondFEM}.
%
%
As a direct result, the pairs $(T_h,T^p_h)$ and $(\hat T_h,\hat T^p_h)$ are well-defined.
Moreover, using Poincaré and Korn's inequalities \crefrange{eq:poincare}{eq:korn2ndlocal}, \cref{stabilityLemma-aux3}, \cref{ThGaLS}, and \cref{eq:bnd_b}, we obtain
\begin{align*}
\|T_h(\bm)\|_{1,\calP}^2+\|T^p_h(\bm)\|_{0,\OO}^2
&\le C\, \|\bm\|_{\bL} \, \|T_h(\bm)-\delta\tilde\ww\|_{1,\calP}\\
&\le 2\,C\, \|\bm\|_{\bL} \, \left( \|T_h(\bm)\|_{1,\calP}^2 + \|T^p_h(\bm)\|_{0,\calP}^2 \right)^\frac12\,.
\end{align*}
choosing $\delta$ small enough.
We proceed the same way for \cref{hThGaLS}, together with a trace inequality, e.g., \cite[Lemma~1.49]{di2011mathematical},
to conclude that there exists a positive constant $C$ satisfying
\begin{align}
\label{eq:stabilityLemmaTh}
\sqrt{\|T_h(\bm)\|_{1,\calP}^2+\|T^p_h(\bm)\|_{0,\OO}^2}
&\leq C \|\bm\|_{\bL}\,, \qquad \forall \bm \in \bL\,,\\
\label{eq:stabilityLemmahTh}
\sqrt{\|\hat T_h(\bq)\|_{1,\calP}^2+\|\hat T^p_h(\bq)\|_{0,\OO}^2}
&\leq C \left(\|\bq\|_{0,\OO} + \|\bg\|_{0,\Gamma_N}\right)\,, \qquad \forall \bq \in \LL^2(\OO)\,.
\end{align}

\subsection{Well-Posedness of the MHM method}
\label{sec:wellPosednessMHM}

The well-posedness of the MHM method on polytopal partitions was previously discussed in \cite{BarJaiParVal20,10.1093/imanum/drac041}.
The main ingredient to achieve well-posedness is to prove $T_h$ is injective on $\calN_H^{}$,
\begin{equation}
\label{ZH}
\calN_H^{}:=\left\{\bm_H^{}\in\bL_H^{}: \sum_{K\in\calP} \langle \bm_H, \vvrm\rangle_{\dK} = 0\,, \quad\forall\vvrm\in \VVRM\right\}\,.
\end{equation}
Since the local problems \cref{ThGaLS} are well-posed, $T_h$ is injective on $\calN_H^{}$ if and only if the following statement holds: (see the proof of \cite[Lemma~6.1]{Harder2016})
\begin{align}
\label{eq:injectivityOfTh}
	\bm_H \in \calN_H \;:\quad
	\sum_{K\in\calP} \langle \bm_H, \tilde\vv_h\rangle_{\dK} = 0\,, \quad\forall\tilde\vv_h\in \tildeVV_h
	\quad\Rightarrow\quad
	\bm_H = \mathbf 0\,.
\end{align}
Notice that, apart from $\tildeVV_h$, this condition has no relation with the local discrete scheme of the MHM method.

In \cite{Harder2016}, the authors prove the injectivity of $T_h$ on two-dimensional problems using
\begin{enumerate}
\item $\calT_h^{} = \calP$, and even polynomial degree $\ell$ under the constraint $k=\ell+1$;
\item quadrilateral meshes $\calT_h^{} = \calP$ under  the constraint $k\geq\ell+2$.
\end{enumerate}
This result was extended in \cite{10.1093/imanum/drac041} for the case of two-level meshes $\calT_h$ that \textit{match} $\calE_H$, which means that each face $\mathrm f$ of an arbitrary $\tau \in \calT_h$ is contained by at most a single $F\in \calE_H$.
Under the matching condition and assuming $k-d\geq \ell \geq 1$, there exists a Fortin operator $\Pi_h^{}:\HH^1(\calP)\to \VV_h$, i.e., an operator that satisfies
\begin{equation}
\label{FortinLowk}
\begin{aligned}
\int_F\Pi_h^{}(\vv)\cdot\bm_H^{} \dif x &= \int_F \vv\cdot\bm_H^{} \dif x \quad\text{for all }\bm_H^{}\in \bL_H^{}\quad\text{and }\,F\in\calE_H^{}\,, \\
\|\Pi_h^{}(\vv)\|_{1,\calP}^{} &\le C\,\|\vv\|_{1,\calP}^{}\,,
\end{aligned}
\end{equation}
for all $\vv\in \HH^1(\calP)$.
The existence of such a Fortin operator also guarantees the injectivity of $T_h$ on $\calN_H$ \cite[Theorem~4.4]{10.1093/imanum/drac041}.

In the following result, we introduce a new sufficient condition for $T_h$ to be injective on $\calN_H^{}$ also based on the existence of a Fortin operator.
We cover the cases $k=\ell+1$ and $k=\ell$ for the two-dimensional case.

\begin{lemma}\label{th:supVVhLowk} 
Let $\OO \subset \RR^2$. Suppose $\calT_h$ \textit{matches} $\calE_H$, and one of the following conditions hold on each $K \in \calP$:
\begin{enumerate}
\item
$k \geqslant \ell+1 \geqslant 2$ and
there is at least 1 node of $\calT_h^K$ on each edge $F \in \edgeH \cap \dK$;
\item
$k \geqslant \ell \geqslant s$ and
there is at least $(4-s)$ nodes of $\calT_h^K$ on the interior of each edge $F \in \edgeH \cap \dK$, where $s \in \{1,2,3\}$.
\end{enumerate}
There exists a mapping $\Pi_h^{}:\HH^1(\calP)\to \VV_h$ such that, for all $\vv\in \HH^1(\calP)$, \cref{FortinLowk} holds.
As a consequence, $T_h$ is injective on $\calN_H^{}$.
\end{lemma}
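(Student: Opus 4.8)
The plan is to construct the Fortin operator $\Pi_h$ explicitly by splitting it into a coarse part that fixes the face moments against $\bL_H$ and a local correction that restores $\HH^1$-stability, following the architecture of the proofs in \cite{10.1093/imanum/drac041,BarJaiParVal20}. First I would recall that on each $K\in\calP$ the local space $\VV_h(K)$ is the standard $\PP_k$ Lagrange space on the shape-regular simplicial mesh $\calT_h^K$, and the target face space $\bL_H(K)$ consists of piecewise $\PP_\ell$ vector fields on $\calE_H^K$. Since $\calT_h$ \emph{matches} $\calE_H$, each coarse edge $F\in\calE_H$ is resolved exactly as a union of fine edges of $\calT_h^K$, so the restriction $\VV_h(K)|_F$ is a conforming piecewise-polynomial space on the induced $1$D partition of $F$. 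The first claim to establish is purely local and combinatorial: under either hypothesis (1) or (2), for each coarse edge $F$ the trace space $\{\vv_h|_F : \vv_h\in\VV_h(K),\ \vv_h=0 \text{ at the two endpoints of }F\}$ is $L^2(F)$-surjective onto $\PP_\ell(F)^d$, \emph{and} one can choose the preimage supported in a one-element-thick layer of $\calT_h^K$ along $F$ with $\HH^1$-norm controlled by $H^{-1/2}$ times the $L^2(F)$-norm of the data. This is where the node-counting enters: imposing a $\PP_\ell$ moment condition on $F$ is $\ell+1$ scalar constraints (per component), and a fine mesh with the stated number of interior nodes carries enough free Lagrange degrees of freedom on $F$ (that vanish at the endpoints, hence do not disturb continuity across vertices of $\calP$) to satisfy them; the case split $s\in\{1,2,3\}$ in hypothesis (2) trades polynomial degree $k\ge\ell$ against the requirement of $(4-s)$ interior nodes, and case (1) exploits the extra degree $k\ge\ell+1$ to get away with a single interior node.

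Next I would assemble the global operator. Given $\vv\in\HH^1(\calP)$, I first choose a crude quasi-interpolant $\vv\mapsto I_h\vv\in\VV_h$ — for instance a Scott–Zhang-type interpolant on each $K$ — which is $\HH^1$-stable, $\|I_h\vv\|_{1,\calP}\le C\|\vv\|_{1,\calP}$, but does not in general match the face moments. Then on each coarse edge $F$ I correct the discrepancy: let $\bchi_F\in\PP_\ell(F)^d$ be the $L^2(F)$-function representing the linear functional $\bm_H\mapsto \int_F(\vv-I_h\vv)\cdot\bm_H\dif s$, and use the local surjectivity claim to pick $\bxi_F\in\VV_h(K)$ supported in the fine layer along $F$, vanishing at the endpoints of $F$, with $\bxi_F|_F$ matching those moments and $\|\bxi_F\|_{1,K}\le C H^{-1/2}\|\bchi_F\|_{0,F}$. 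Setting $\Pi_h(\vv) := I_h\vv + \sum_{F}\bxi_F$ gives the first line of \cref{FortinLowk} by construction (the corrections on different edges have essentially disjoint supports, or at worst overlap only near shared vertices where all contributions vanish, so they do not interfere). For the stability bound I estimate $\|\bchi_F\|_{0,F}\le C H^{-1/2}\|\vv-I_h\vv\|_{0,K}+C H^{1/2}|\vv-I_h\vv|_{1,K}$ by a scaled trace inequality on $K$, then use the approximation property of $I_h$ to bound $\|\vv-I_h\vv\|_{0,K}\le C h_K|\vv|_{1,K}\le C\calH|\vv|_{1,\calP,K}$; combining with $\|\bxi_F\|_{1,K}\le CH^{-1/2}\|\bchi_F\|_{0,F}$ and summing over $F$ and $K$ yields $\|\sum_F\bxi_F\|_{1,\calP}\le C\|\vv\|_{1,\calP}$, hence the second line of \cref{FortinLowk}. (A small care point: the inverse-inequality factor $H^{-1/2}$ from the $\HH^1$-norm of the fine correction is exactly cancelled by the $H^{1/2}$ from the trace term, which is why shape regularity of $\calT_h^K$ and the matching condition are needed, not any relation between $h$ and $H$.)

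Finally, the implication ``existence of such $\Pi_h$ $\Rightarrow$ $T_h$ injective on $\calN_H$'' is already available: it is exactly \cite[Theorem~4.4]{10.1093/imanum/drac041}, whose argument I would just invoke — given $\bm_H\in\calN_H$ with $\sum_K\langle\bm_H,\tilde\vv_h\rangle_{\dK}=0$ for all $\tilde\vv_h\in\tildeVV_h$, the Fortin property lets one replace an arbitrary $\vv\in\HH^1(\calP)$ by $\Pi_h\vv$ without changing $\sum_K\langle\bm_H,\vv\rangle_{\dK}$ (after subtracting the rigid-body part, which is killed because $\bm_H\in\calN_H$), so $\|\bm_H\|_{\bL}=0$ by \cref{eq:bnd_b}, whence $\bm_H=\mathbf0$; then the well-posedness of \cref{ThGaLS} converts injectivity of this pairing into injectivity of $T_h$ on $\calN_H$ as in \cite[Lemma~6.1]{Harder2016}. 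The main obstacle is the local surjectivity-with-stability claim on a single coarse edge under the sharp node counts — in particular verifying that the Lagrange basis functions associated to interior fine-edge nodes on $F$ really span $\PP_\ell(F)^d$ modulo the endpoint-vanishing constraint, for each of the regimes $k\ge\ell+1$ with one interior node and $k\ge\ell\ge s$ with $(4-s)$ interior nodes, and doing so with a unisolvency/counting argument that is uniform in the mesh. Everything else is bookkeeping with standard scaled trace, inverse, and interpolation estimates on shape-regular simplices.
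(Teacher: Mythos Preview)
Your route differs from the paper's and also carries a scaling slip that, as written, breaks the uniform bound. The paper does not rebuild the Fortin operator at all: it simply takes the \emph{scalar} operator $\Pi_h^{BJPV}$ already constructed in \cite[Lemma~2]{BarJaiParVal20} and applies it componentwise, setting $\Pi_h(\vv)_i := \Pi_h^{BJPV}(v_i)$ for $i=1,\dots,d$, after checking in two lines that the one-dimensional unisolvency arguments of \cite[Lemmas~4--5]{BarJaiParVal20} go through for non-equispaced fine partitions of each edge $F$. Both lines of \cref{FortinLowk} are then immediate from the scalar case. Your Scott--Zhang-plus-edge-correction construction is essentially how \cite{BarJaiParVal20} builds $\Pi_h^{BJPV}$ in the first place, so you are re-deriving the cited lemma rather than invoking it; that is legitimate and self-contained, but heavier, and it leaves the edge-unisolvency claim (which you correctly flag as the main obstacle) still to be proved, whereas the paper absorbs it into the citation.

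The scaling in your stability estimate needs repair. The correction $\bxi_F$ is supported in a one-\emph{fine}-element layer of thickness $\sim h$, so the extension bound is $\|\bxi_F\|_{1,K}\le C\,h^{-1/2}\|\bchi_F\|_{0,F}$, not $H^{-1/2}$; likewise the trace inequality should be taken on the fine triangles touching $F$, giving $h^{\pm1/2}$ factors, and the Scott--Zhang error is $\|\vv-I_h\vv\|_{0,K}\le C\,h\,|\vv|_{1,K}$, not $C\,h_K$. With your factors one arrives at $\|\bxi_F\|_{1,K}\lesssim (H^{-1}h_K+1)\,|\vv|_{1,K}$, which blows up as $H\to 0$ with $\calP$ fixed --- exactly the regime the lemma targets. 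Replacing $H$ and $h_K$ by $h$ throughout, the cancellation you describe does occur and the bound becomes uniform; the node-count hypotheses guarantee $h\lesssim H$ near each $F$, which is what makes the local inverse and trace constants mesh-independent.
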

\begin{proof}
Define
$
\mathcal{X} = \{q \in H_0^1(0,1) \,:\, q\sVert[0]_{(0,t)} \in \PP_{k+1}(0,t) \;\text{and}\; q\sVert[0]_{(t,1)} \in \PP_{k+1}(t,1) \}\,,
$
where $t \in (0,1)$.
Using the arguments in the proof of \cite[Lemma~4]{BarJaiParVal20}, we conclude that all $\varphi \in \PP_k(0,1)$ satisfying $\int_0^1 \varphi\,q\,\dif x = 0$ for all $q \in \mathcal{X}$ are identically zero in $[0,1]$.
In the same sense, the arguments used to prove \cite[Lemma~5]{BarJaiParVal20} are still valid over partitions of $(0,1)$ that are not equally spaced.
This being noticed, define $\Pi_h(\vv)_i := \Pi_h^{BJPV}(v_i)$, for each $i = 1, \cdots, d$, where $\Pi_h^{BJPV}:H^1(\calP)\to C^0(\calP) \cap \PP_k(\calT_h)$ is the operator from \cite[Lemma~2]{BarJaiParVal20}.
Using the properties of $\Pi_h^{BJPV}$, we conclude $\Pi_h$ satisfies the first equation in \cref{FortinLowk}.
Moreover, there exists a positive constant $C_K$ such that
\begin{align}\label{th:supVVhLowk_proof} 
\|\Pi_h^{BJPV}(v)\|_{1,K} \leqslant C_K\, \|v\|_{1,K}, \quad \text{for all } v \in H^1(K)\,,
\end{align}
for all $K \in \calP$.
Thus, we use \cref{th:supVVhLowk_proof} to complete the proof as follows
\begin{align*}
\|\Pi_h^{}(\vv)\|_{1,\calP}^{2}
&= \sum_{K \in \calP} \sum_{i=1}^d \|\Pi_h^{BJPV}(v_i)\|_{1,K}^2
\leqslant C\, \sum_{K \in \calP} \sum_{i=1}^d \|v_i\|_{1,K}^2
\leqslant C\, \|\vv\|_{1,\calP}^2\,.
\end{align*}
\end{proof}

For the three-dimensional case, one may use similar arguments to conclude that the Fortin mapping exists if, on each $K \in \calP$,
\begin{enumerate}
\item
$k = \ell = 1$ and
there is at least three non-collinear nodes of $\calT_h^K$ on the interior of face $F \in \edgeH \cap \dK$;
\item
$k = \ell+1 = 2$ and
there is at least one node of $\calT_h^K$ on the interior of each edge $E \subset \partial F$, where $F \in \edgeH \cap \dK$ is a face.
\end{enumerate}

%


Provided $T_h$ is injective, one may prove the well-posedness of the MHM method as follows.
\begin{theorem}\label{stabilityGaLS}
Suppose $T_h$ is injective on $\calN_H^{}$.
Then, there exist positive constants $\alpha_0$ and $\beta_0^{}$, independent of $H$ and $h$ and that do not degenerate when the Poisson's ratio approaches $1/2$, such that
\begin{align}
\label{elipt-hGaLS}
\sum_{K\in\calP} \langle \bm_H, T_h(\bm_H)\rangle_{\dK} &\ge \alpha_0^{}\,\|\bm_H^{}\|_{\bL}^2 & \qquad & \text{for all $\bm_H\in \calN_H$}\,,\\
\label{inf-sup-hGaLS}
\sup_{\bm_H^{}\in\bL_H^{}\backslash\{\mathbf 0\}}\frac{\sum_{K\in\calP} \langle \bm_H, \vvrm\rangle_{\dK}}{\|\bm_H^{}\|_{\bL}^{}} 
&\ge \beta_0^{}\,\|\vvrm\|_{0,\OO}^{} & \qquad & \text{for all $\vvrm\in \VVRM$}\,.
\end{align}
Then, \cref{hybrid-h}-\cref{hybrid-VVRM-h} is well-posed.
\end{theorem}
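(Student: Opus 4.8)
The plan is to verify the two Brezzi conditions \cref{elipt-hGaLS}--\cref{inf-sup-hGaLS} for the saddle-point problem \cref{hybrid-h}--\cref{hybrid-VVRM-h}, whose bilinear forms are $(\bm_H,\bg_H)\mapsto \sum_K\langle\bm_H,T_h(\bg_H)\rangle_{\dK}$ on $\bL_H\times\bL_H$ and $(\bm_H,\vvrm)\mapsto\sum_K\langle\bm_H,\vvrm\rangle_{\dK}$ on $\bL_H\times\VVRM$. Once both hold with constants that do not blow up as $\nu\to 1/2$, well-posedness is a direct application of the BNB/Brezzi theory (as in \cite[Theorem~2.6]{ernGuermondFEM}), and the continuity of both forms is immediate from \cref{eq:bnd_b} and the stability bound \cref{eq:stabilityLemmaTh}. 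So the real content is the two coercivity/inf-sup estimates.

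For the coercivity \cref{elipt-hGaLS} on the kernel $\calN_H$: fix $\bm_H\in\calN_H$ and test \cref{ThGaLS} with $(\tilde\vv_h,q_h)=(T_h(\bm_H),T_h^p(\bm_H))$, which gives $\sum_K\langle\bm_H,T_h(\bm_H)\rangle_{\dK}=\sum_K B_K(T_h(\bm_H),T_h^p(\bm_H);T_h(\bm_H),T_h^p(\bm_H))$. I would then invoke the local stability estimate \cref{stabilityLemma-aux3} (the key inequality inside the proof of \cref{stabilityLemma}) summed over $K$ to bound this below by $C\bigl(\|T_h(\bm_H)\|_{1,\calP}^2+\|T_h^p(\bm_H)\|_{0,\OO}^2\bigr)$, using Poincaré and Korn \crefrange{eq:poincare}{eq:korn2ndlocal} to pass from $\|\EE(\cdot)\|_{0,\calP}$ to $\|\cdot\|_{1,\calP}$; all constants here are $\nu$-independent by construction of $\alpha_K$ in \cref{eq:alphaHyp}. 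To close the loop I need $\|T_h(\bm_H)\|_{1,\calP}\gtrsim\|\bm_H\|_{\bL}$ for $\bm_H\in\calN_H$; this is where injectivity of $T_h$ on $\calN_H$ (the hypothesis) enters, but injectivity alone only gives $>0$, so I need a \emph{quantitative} inf-sup: $\sup_{\vv\in\HH^1(\calP)}\frac{\sum_K\langle\bm_H,\vv\rangle_{\dK}}{\|\vv\|_{1,\calP}}\le C\|T_h(\bm_H)\|_{1,\calP}$. This follows by taking $\vv\in\HH^1(\calP)$ realizing (near-)optimally the supremum in \cref{eq:bnd_b}, applying the Fortin operator $\Pi_h$ of \cref{FortinLowk} (so that $\langle\bm_H,\vv\rangle_{\dK}=\langle\bm_H,\Pi_h\vv\rangle_{\dK}$ since $\bm_H|_F\in\PP_\ell(F)$ and $\Pi_h$ preserves face moments against $\bL_H$), splitting $\Pi_h\vv=\Pi_{RM}\Pi_h\vv+(\Pi_h\vv-\Pi_{RM}\Pi_h\vv)$, discarding the rigid-motion part because $\bm_H\in\calN_H$, and then using \cref{ThGaLS} with $\tilde\vv_h$ the $\tildeVV_h$-component together with \cref{eq:stabilityVsum} and $\|\Pi_h\vv\|_{1,\calP}\le C\|\vv\|_{1,\calP}$. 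Combining these gives $\|\bm_H\|_{\bL}\le C\|T_h(\bm_H)\|_{1,\calP}$, hence \cref{elipt-hGaLS}.

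For the inf-sup condition \cref{inf-sup-hGaLS}: given $\vvrm\in\VVRM$, I would exhibit a good test function $\bm_H\in\bL_H$. The standard device is to pick $\btau\in\Hbdiv{\OO}$ with $\btau\nn|_{\Gamma_N}=0$ and $-\Div\btau$ proportional to $\vvrm$ on $\OO$ (solving an auxiliary elasticity-type problem on $\OO$, or using that $\VVRM$ is finite-dimensional and the continuous inf-sup between $\bL$ and $\VVRM$ holds from the well-posedness of \crefrange{hybrid}{hybrid-VVRM}), set $\bm_H$ to be the $\bL_H$-projection of $\btau\nn^K$ on each $\dK$, and check $\sum_K\langle\bm_H,\vvrm\rangle_{\dK}=\sum_K\langle\btau\nn^K,\vvrm\rangle_{\dK}$ — this identity is exact because $\vvrm|_K\in\VVRM(K)$ is a polynomial of degree $\le 1\le\ell$ on each face, so the projection defining $\bm_H$ preserves the pairing against $\vvrm$. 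An integration by parts on each $K$ then turns $\sum_K\langle\btau\nn^K,\vvrm\rangle_{\dK}$ into $-\int_\OO\Div\btau\cdot\vvrm\,\dx$ (the $\EE(\vvrm)=0$ term vanishes), which is $\gtrsim\|\vvrm\|_{0,\OO}^2$; bounding $\|\bm_H\|_{\bL}\le C\|\btau\|_{\Hbdiv{\OO}}\le C\|\vvrm\|_{0,\OO}$ finishes it. The main obstacle I anticipate is the quantitative step in the coercivity proof — converting the abstract injectivity of $T_h$ into the norm-equivalence $\|\bm_H\|_{\bL}\simeq\|T_h(\bm_H)\|_{1,\calP}$ on $\calN_H$ with an $H,h,\nu$-independent constant — which is exactly why the Fortin operator from \cref{th:supVVhLowk} (rather than mere injectivity) is needed, and where care must be taken that the constant in \cref{FortinLowk} is uniform over the family of meshes.
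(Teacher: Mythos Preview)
Your overall architecture matches the paper's, and the Fortin/inf-sup parts are fine, but there is a genuine gap in the coercivity step. You propose to test \cref{ThGaLS} with $(\tilde\vv_h,q_h)=(T_h(\bm_H),T_h^p(\bm_H))$ and then apply \cref{stabilityLemma-aux3} to conclude $B_K(T_h(\bm_H),T_h^p(\bm_H);T_h(\bm_H),T_h^p(\bm_H))\ge C(\|\EE(T_h(\bm_H))\|_{0,K}^2+\|T_h^p(\bm_H)\|_{0,K}^2)$. But \cref{stabilityLemma-aux3} bounds $B_K(\tilde\uu_h,p_h;\tilde\uu_h-\delta\tilde\ww,-p_h)$ from below, not $B_K(\tilde\uu_h,p_h;\tilde\uu_h,p_h)$; the latter carries the wrong-sign terms $-2\int_K p_h\Div\tilde\uu_h-\int_K\upepsilon p_h^2$ from the saddle-point structure and is \emph{not} coercive. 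If instead you test \cref{ThGaLS} with $(\tilde\uu_h-\delta\tilde\ww,-p_h)$ to match \cref{stabilityLemma-aux3}, the right-hand side becomes $\langle\bm_H,T_h(\bm_H)-\delta\tilde\ww\rangle_{\dK}$, and the stray term $\delta\langle\bm_H,\tilde\ww\rangle_{\dK}$ (with $\|\tilde\ww\|_{1,K}=\|T_h^p(\bm_H)\|_{0,K}$) brings $\|\bm_H\|_{\bL}$ back to the right-hand side and cannot be absorbed. A related slip appears in your Fortin step: going through \cref{ThGaLS} and the boundedness of $B_K$ you only obtain $\|\bm_H\|_{\bL}\le C\bigl(\|T_h(\bm_H)\|_{1,\calP}^2+\|T_h^p(\bm_H)\|_{0,\OO}^2\bigr)^{1/2}$, not $\|\bm_H\|_{\bL}\le C\|T_h(\bm_H)\|_{1,\calP}$, so you cannot dispense with the pressure control.

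The paper closes this gap by first proving a tailored estimate \cref{eq:stabilityBK}: using that the right-hand side of \cref{ThGaLS} is independent of $q_h$ (so $B_K(T_h(\bm),T_h^p(\bm);0,q_h)=0$ for every $q_h$), one has $\langle\bm_H,T_h(\bm_H)\rangle_{\dK}=B_K(T_h(\bm_H),T_h^p(\bm_H);T_h(\bm_H),0)$, and the paper then rewrites this quantity as a weighted combination of the two building blocks \cref{stabilityLemma-aux1} and \cref{stabilityLemma-aux2} (with a fresh parameter $\delta$) to obtain $B_K(T_h(\bm),T_h^p(\bm);T_h(\bm),0)\ge C\bigl(\|\EE(T_h(\bm))\|_{0,K}^2+\|T_h^p(\bm)\|_{0,K}^2\bigr)$ with a $\nu$-independent constant. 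After that, your Fortin argument (which is exactly \cref{elipt-hGaLS-aux} in the paper) combined with \cref{boundnessLemma} and \cref{eq:stabilityBK} yields \cref{elipt-hGaLS}. For \cref{inf-sup-hGaLS} the paper simply cites \cite{10.1093/imanum/drac041}; your explicit construction is correct and equivalent.
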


\begin{proof}
We begin proving an auxiliary result.
Let $\tilde\ww\in\tildeVV_h$ be a function satisfying \cref{preStabilityLemma} and such that $\|\tilde\ww\|_{1,K} = \|T^p_h(\bm)\|_{0,K}$.
Therefore, for every $\delta > 0$, 
\begin{align*}
B_K(&T_h(\bm), T^p_h(\bm);T_h(\bm),0)\\
&= B_K\left(T_h(\bm), T^p_h(\bm);T_h(\bm) - \frac{\delta}{2} \tilde\ww,0\right) + \frac{\delta}{2} B_K(T_h(\bm), T^p_h(\bm); \tilde\ww,0)\\
&= B_K\left(T_h(\bm), T^p_h(\bm);T_h(\bm) - \frac{\delta}{2} \tilde\ww,-T^p_h(\bm)\right) + \frac{\delta}{2} B_K(-T_h(\bm), -T^p_h(\bm); - \tilde\ww,0)\\
&\geqslant \left(C_3- \delta C_6\right) \|\EE(T_h(\bm))\|_{0,K}^2 + \int_K (\upepsilon + \delta C_7)\,|T^p_h(\bm)|^2 \dif x +\\
&\qquad + \left( \alpha_K - \delta C_8 \right) \sum_{\tau \in \calT_h^K} h_\tau^2 |T^p_h(\bm)|_{1,\tau}^2\,,
\end{align*}
where the positive constants $C_3,C_6,C_7,C_8$ are defined in the proof of \cref{stabilityLemma}.
We can choose $\delta$ small enough to obtain a positive constant $C$ such that
\begin{align}\label{eq:stabilityBK}
B_K(T_h(\bm), T^p_h(\bm);T_h(\bm),0) \geqslant C (\|\EE(T_h(\bm))\|_{0,K}^2+\|T^p_h(\bm)\|_{0,K}^2)\,,
\end{align}
for all $\bm\in \bL$ and $K \in \triH$.
We use this formula to prove \cref{elipt-hGaLS}.

Let $\bm_H\in \calN_H$ and, for every $\vv\in\HH^1(\calP)$, denote $\tilde\vv= \vv-\Pi_{RM}^{}(\vv) \in \tildeVV$.
We can use \cref{eq:bnd_b}, \cref{ZH}, \cref{eq:stabilityVsum}, and $C_{korn} \geqslant 1$ to obtain
\begin{align}
\label{elipt-hGaLS-aux}
\|\bm_H\|_{\bL}
&\leq C_{korn}\, \sup_{\tilde\vv\in \tildeVV \backslash \{\mathbf 0\}}\frac{\sum_{K\in\triH}\langle\bm_H,\tilde\vv\rangle_{\dK}}{\| \tilde\vv\|_{1,\calP}}
\leq C\, \sup_{\tilde\vv_h\in \tildeVV_h \backslash \{\mathbf 0\}}\frac{\sum_{K\in\triH}\langle\bm_H,\tilde\vv_h\rangle_{\dK}}{\| \tilde\vv_h\|_{1,\calP}}\,.
\end{align}
The constant $C$ does not depend on $h$, $H$ or $\calH$ (see, for instance, \cite[Lemma~6.1]{Harder2016} and \cite[Lemma~10]{RavThom1977}).
\Cref{elipt-hGaLS} follows from \cref{elipt-hGaLS-aux}, \cref{ThGaLS}, \cref{boundnessLemma} and \cref{eq:stabilityBK}, in this order, with $q_h = \mathbf 0$, and finally \cref{ThGaLS} once more.
The inf-sup condition \cref{inf-sup-hGaLS} follows directly from \cite{10.1093/imanum/drac041}.
Equations \cref{elipt-hGaLS} and \cref{inf-sup-hGaLS} imply the stability and well-posedness of the formulation \cref{hybrid-h}-\cref{hybrid-VVRM-h} using the classical saddle-point theory.
\end{proof}

\begin{remark}
When there exists a Fortin operator $\Pi_h^{}:\HH^1(\calP)\to \VV_h$ satisfying \cref{FortinLowk}, the constant $C$ in \cref{elipt-hGaLS-aux} is bounded by $C_{korn}$, $C_P$ and $C$ from \cref{FortinLowk}.
The latter depends on the polynomial order $k$, $d$ and $\sigma_\calT$ for the case $k \geq \ell + 1$ as states \cite[Lemma 4.2]{10.1093/imanum/drac041}.
\end{remark}


\section{A priori error estimates for the MHM method}
\label{sec:convergenceMHM}

The convergence estimates can be split into two parts: local, related to $T_h$ and $\hat T_h$ inside each element $K\in \calP$, and global, which is related to the best approximation of $\bL$ in $\bL_H$.
The next result tackles the approximation properties of the local formulation, i.e., it shows how good $(T_h,T^p_h)$ and $(\hat T_h,\hat T^p_h)$ approximate $\left(T,-\upepsilon^{-1}\,\Div T\right)$ and $\left(\hat T,-\upepsilon^{-1}\,\Div \hat T\right)$.
Moreover, it states the discrete formulation is locking-free.

\begin{theorem}\label{th:localErrorBound}
Let $\bm \in \bL$ and $\bq \in \LL^2(\OO)$ be such that $\tilde\uu := T(\bm)\sVert[0]_K + \hat T(\bq)\sVert[0]_K \in\HH^{s+1}(K)$ and $p := -\upepsilon^{-1}\Div\tilde\uu \in H^s(K)$, with $1 \leq s \leq k$.
The solutions $\tilde\uu_h := T_h(\bm)\sVert[0]_K+\hat T_h(\bq)\sVert[0]_K$ and $p_h := T^p_h(\bm)\sVert[0]_K+\hat T^p_h(\bq)\sVert[0]_K$, defined using \cref{ThGaLS}-\cref{hThGaLS}, satisfy the following estimate
\begin{align}\label{eq:localErrorBound}
\|\tilde\uu-\tilde\uu_h\|_{1,K} + \|p-p_h\|_{0,K}
\leqslant C\, h^s\,\left( |\tilde\uu|_{s+1,K} + |p|_{s,K}\right)\,.
\end{align}
\end{theorem}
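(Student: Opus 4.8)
The plan is to prove \cref{eq:localErrorBound} by the classical Céa-type argument adapted to the stabilized mixed setting: use the stability of $B_K$ (\cref{stabilityLemma}) together with its boundedness (\cref{boundnessLemma}) and a consistency property, then conclude with standard interpolation estimates. First I would establish \emph{consistency}: since $\tilde\uu = T(\bm)|_K + \hat T(\bq)|_K$ solves the pure-traction elasticity problem with $p = -\upepsilon^{-1}\Div\tilde\uu$, the pair $(\tilde\uu, p)$ satisfies $B_K(\tilde\uu, p; \vv_h, q_h) = \langle \bm, \vv_h\rangle_{\dK} + F_K(\bq; \vv_h, q_h)$ for all $(\vv_h, q_h)\in \tildeVV_h(K)\times Q_h(K)$. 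The Least-Squares terms vanish in the consistency identity on each $\tau$ because $-\bdiv(2G\EE(\tilde\uu) - p\,\id_d) = \bq$ holds (from the strong form $-\Div\fStress(\tilde\uu) = \Pi_{RM}(\bq)$ adjusted appropriately), so the stabilization is strongly consistent; combined with \cref{ThGaLS,hThGaLS} this yields the Galerkin orthogonality $B_K(\tilde\uu - \tilde\uu_h, p - p_h; \vv_h, q_h) = 0$ for all discrete test pairs. One must be slightly careful here because the right-hand side of \cref{hThGaLS} involves $\Pi_{RM}(\bq)$ rather than $\bq$, and because $\tilde\uu$ only satisfies $-\Div\fStress(\tilde\uu) = \Pi_{RM}(\bq)$ pointwise; I expect the $\VVRM(K)$-valued discrepancy $\bq - \Pi_{RM}(\bq)$ to be orthogonal to the relevant test functions (or to be handled by noting $\bdiv$ of a rigid-body-motion stress vanishes), making the consistency exact.

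Next I would run the standard argument: let $(\vv_h, q_h)\in \tildeVV_h(K)\times Q_h(K)$ be \emph{any} interpolant of $(\tilde\uu, p)$, write $\tilde\uu - \tilde\uu_h = (\tilde\uu - \vv_h) + (\vv_h - \tilde\uu_h)$ and similarly for the pressure, and estimate the discrete error $(\vv_h - \tilde\uu_h, q_h - p_h)$. By \cref{stabilityLemma} there is a test pair $(\ww_h, r_h)$ with
\begin{align*}
C\,(\|\EE(\vv_h - \tilde\uu_h)\|_{0,K}^2 + \|q_h - p_h\|_{\upepsilon,K}^2)^{1/2}
\leqslant \frac{B_K(\vv_h - \tilde\uu_h, q_h - p_h; \ww_h, r_h)}{(\|\EE(\ww_h)\|_{0,K}^2 + \|r_h\|_{\upepsilon,K}^2)^{1/2}}.
\end{align*}
Using Galerkin orthogonality to replace $(\vv_h - \tilde\uu_h, q_h - p_h)$ by $(\vv_h - \tilde\uu, q_h - p)$ in the numerator, then \cref{boundnessLemma}, gives
\begin{align*}
\|\EE(\vv_h - \tilde\uu_h)\|_{0,K} + \|q_h - p_h\|_{\upepsilon,K}
\leqslant C\,(\|\EE(\tilde\uu - \vv_h)\|_{0,K} + \|p - q_h\|_{\upepsilon,K}).
\end{align*}
A triangle inequality, Korn's inequality \cref{eq:korn2ndlocal} to pass from $\|\EE(\cdot)\|_{0,K}$ to $|\cdot|_{1,K}$, and Poincaré \cref{eq:poincare} to control the $\LL^2$ part of the $\HH^1$ norm (both $\tilde\uu - \tilde\uu_h$ and the interpolation error living essentially in $\tildeVV(K)$) then reduce everything to the best-approximation errors $\inf_{\vv_h}(\|\tilde\uu - \vv_h\|_{1,K} + \|p - q_h\|_{0,K})$. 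A subtlety: the $\upepsilon$-norm carries the factor $(1+\upepsilon)$ which blows up as $\nu\to 1/2$, so to keep the estimate locking-free I would choose $q_h$ to be an interpolant for which $\|p - q_h\|_{0,K}$ is controlled and then absorb the $\upepsilon$ dependence by using that $p = -\upepsilon^{-1}\Div\tilde\uu$, i.e. $\upepsilon^{1/2}\|p - q_h\|_{0,K}$ should be bounded in terms of $\Div$-errors of $\tilde\uu$; alternatively one shows the constant $C$ multiplying the final bound is $\nu$-independent because the stability and boundedness constants of $\cref{stabilityLemma,boundnessLemma}$ already are. Finally I would invoke standard polynomial interpolation estimates on the shape-regular mesh $\calT_h^K$ — $\|\tilde\uu - I_h\tilde\uu\|_{1,K}\leqslant C h^s |\tilde\uu|_{s+1,K}$ and $\|p - I_h p\|_{0,K}\leqslant C h^s |p|_{s,K}$ for $1\leqslant s\leqslant k$ — to arrive at \cref{eq:localErrorBound}.

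The main obstacle I anticipate is the locking-free bookkeeping: verifying that every constant that appears — the stability constant $C$ of \cref{stabilityLemma}, the boundedness constant of \cref{boundnessLemma}, the interpolation constants, and especially the handling of the $\|\cdot\|_{\upepsilon,K}$ terms and the stabilization parameter $\alpha_K$ in \cref{eq:alphaHyp} — remains bounded uniformly as $\nu\uparrow 1/2$ (equivalently $\upepsilon\downarrow 0$). In particular one must check that the interpolation error in the pressure is measured in the plain $\|\cdot\|_{0,K}$ norm (not the $\upepsilon$-norm) on the left-hand side of \cref{eq:localErrorBound}, so the $(1+\upepsilon)$ factor must be shed cleanly; this is where the argument has to exploit that $p$ itself is a fixed $H^s$ function independent of the reformulation and that the mixed term $-\upepsilon p q$ in $B_K$ is benign (it only helps coercivity). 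A secondary, more technical point is the consistency of the Least-Squares stabilization with heterogeneous $G$: one needs $\bdiv(2G\EE(\tilde\uu) - p\,\id_d)\in L^2(\tau)$ for each $\tau$, which requires the extra regularity $G\in W^{1,\infty}(\calT_h^K)$ already assumed, and then the residual on each $\tau$ equals $-\bq$ (up to the rigid-motion projection), killing the stabilization contribution to the error equation.
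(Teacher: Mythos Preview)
Your overall Céa-type strategy is correct and matches the paper's, but there is a genuine gap at the step where you write ``then \cref{boundnessLemma}, gives\ldots''. After Galerkin orthogonality the first argument of $B_K$ becomes $(\vv_h-\tilde\uu,\,q_h-p)$, which is \emph{not} in $\tildeVV_h(K)\times Q_h(K)$. \Cref{boundnessLemma} is stated only for discrete pairs because its proof invokes \cref{th:auxiliarAlphaLemma2}, which in turn relies on the inverse inequality \cref{eq:ineqCI}; that inverse inequality is false for the continuous interpolation error $\tilde\uu-\vv_h$. So you cannot simply bound $B_K(\vv_h-\tilde\uu,q_h-p;\ww_h,r_h)$ by the right-hand side you wrote.

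The paper's fix is exactly to \emph{not} apply \cref{boundnessLemma} as a black box but to reopen its Cauchy--Schwarz step and keep the stabilization term explicit, obtaining a bound that still contains
\[
\alpha_K\sum_{\tau\in\calT_h^K} h_\tau^2\,\|\bdiv(2G\,\EE(\tilde\uu-\vv_h)-(p-q_h)\id_d)\|_{0,\tau}^2\,,
\]
and then to estimate this \emph{without} inverse inequalities, arriving at the two extra residual-type quantities $\sum_\tau h_\tau^2\|\bdiv\EE(\tilde\uu-\vv_h)\|_{0,\tau}^2$ and $|p-q_h|_{h,K}^2$. These require separate interpolation estimates (the paper uses the Scott--Zhang interpolant and the regularity $\tilde\uu\in\HH^{s+1}(K)$, $p\in H^s(K)$ to get $Ch^{2s}|\tilde\uu|_{s+1,K}^2$ and $Ch^{2s}|p|_{s,K}^2$, respectively). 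Your proposal omits these residual terms entirely, so as written it would not close.

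A minor point: your worry that ``the factor $(1+\upepsilon)$ blows up as $\nu\to 1/2$'' is inverted. From \cref{eq:pAndEpsilon}, $\upepsilon\to 0$ as $\nu\to 1/2$, so $\|\cdot\|_{\upepsilon,K}\to\|\cdot\|_{0,K}$ and there is nothing to shed in that limit; the $\upepsilon$-norm is harmless for locking.
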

\begin{proof}
Using the \cref{stabilityLemma}, the consistency of the schemas \cref{ThGaLS}-\cref{hThGaLS} for $\tilde\uu \in\HH^2(K)$ and $\upepsilon\,p \in H^1(K)$, and the proof of \cref{boundnessLemma}, we obtain
\begin{align*}
&(\|\EE(\tilde\uu_h-\tilde\ww_h)\|_{0,K}^2+\|p_h-s_h\|_{\upepsilon,K}^2)^{1/2} \leqslant
C \Big(
	\|2G\|_{L^\infty(K)} \|\EE(\tilde\uu-\tilde\ww_h)\|_{0,K}^2 \nonumber\\
	&\qquad\qquad + \|p-s_h\|_{0,K}^2
	+ \|\sqrt{\upepsilon} (p-s_h)\|_{0,K}^2
	+ \|\Div(\tilde\uu-\tilde\ww_h)\|_{0,K}^2 \nonumber\\
	&\qquad\qquad + \alpha_K \sum_{\tau\in\calT_h^K} h_\tau^2 \|\bdiv\,(2G\,\EE(\tilde\uu-\tilde\ww_h)-(p-s_h)I)\|_{0,\tau}^2
\Big)^{\frac12}\,,
\end{align*}
for all $(\tilde\ww_h,s_h) \in \tildeVV(K) \times Q_h$.
We then proceed like in the proof of \cref{th:auxiliarAlphaLemma,th:auxiliarAlphaLemma2} and, using \cref{hNorm}, obtain
\begin{align}
&(\|\EE(\tilde\uu_h-\tilde\ww_h)\|_{0,K}^2+\|p_h-s_h\|_{\upepsilon,K}^2)^{1/2} \nonumber
\leq C \Big(
	\|\EE(\tilde\uu-\tilde\ww_h)\|_{0,K}^2 \\
	&\qquad\qquad + \|p-s_h\|_{0,K}^2 + |p-s_h|_{h,K}^2 + \sum_{\tau\in\calT_h^K} h_\tau^2 \|\bdiv\,\EE(\tilde\uu-\tilde\ww_h)\|_{0,\tau}^2
\Big)^{\frac12}\,,
\label{eq:localErrorBound_aux2}
\end{align}
%
Now, we choose $\ww_h$ to be the Scott-Zhang interpolation \cite{scottZhang1990Interpolation} of $\tilde\uu$ onto $\VV_h(K)$ and define $\tilde\ww_h := \ww_h - \Pi_{RM}\ww_h$.
Therefore,
\begin{align}
\label{eq:interpolation1}
\|\EE(\tilde\uu-\tilde\ww_h)\|_{0,K}
&\leqslant
|\tilde\uu-\ww_h|_{1,K}
\leqslant
C\,h^s\,|\tilde\uu|_{s+1,K}\,,\\
\sum_{\tau \in \calT_h^K} h_\tau^2 \|\bdiv\,\EE(\tilde\uu-\tilde\ww_h)\|_{0,\tau}^2
&\leqslant
d\,\sum_{\tau \in \calT_h^K} h_\tau^2 |\tilde\uu-\ww_h|_{2,\tau}^2
\leqslant
C\,h^{2s} |\tilde\uu|_{s+1,K}^2\,.
\label{eq:interpolation2}
\end{align}
%
Analogously, we use $s_h$ as the Scott-Zhang interpolation of $p$ onto $Q_h(K)$, and verify the existence of a positive constant $C$ such that
\begin{align}
\label{eq:interpolation3}
\|p-s_h\|_{0,K} + |p-s_h|_{h,K} &\leqslant C\,h^s\,|p|_{s,K}\,.
\end{align}
Now, we replace \cref{eq:interpolation1}-\cref{eq:interpolation3} into \cref{eq:localErrorBound_aux2} to obtain
\begin{align}
\label{eq:localErrorBound_aux3}
(\|\EE(\tilde\uu_h-\tilde\ww_h)\|_{0,K}^2+\|p_h-s_h\|_{\upepsilon,K}^2)^{1/2}
&\leqslant
C\, h^s\,\left( |\tilde\uu|_{s+1,K}^2 + |p|_{s,K}^2\right)^{\frac12}\,.
\end{align}
Starting from the left side of \cref{eq:localErrorBound} we use
the Poincaré and Korn's inequalities \cref{eq:poincare,eq:korn2ndlocal},
sum and subtract $\tilde\ww_h$ and $\tilde s_h$,
use the triangle inequality,
and \cref{eq:interpolation1,eq:interpolation3,eq:localErrorBound_aux3},
to complete the proof.
\end{proof}
%

As for the global part of the method, we use the best approximation result in $\bL_H$ from \cite{10.1093/imanum/drac041} to prove the following result.

\begin{theorem}
\label{converGaLS}
Suppose $T_h$ is injective on $\calN_H^{}$, and that $\alpha_K$ satisfies \cref{eq:alphaHyp} for all $K \in \calP$.
Moreover, suppose $\ell \geqslant 1$, and that there exists a mapping $\Pi_h^{}:\HH^1(\calP)\to \VV_h$ satisfying \cref{FortinLowk} for all $\vv\in \HH^1(\calP)$.
Let $\uu$ be the solution of \cref{elliptic}, and let $1 \leqslant s \leqslant k$ and $1 \leqslant m \leqslant \min\{s,\ell+1\}$ be such that
$\uu \in \HH^{s+1}(\calP)$, $p = -\upepsilon^{-1}\Div\tilde\uu \in H^s(\calP)$ and $\stress\in H^m(\calP)^{d\times d}$. Then, there exists a positive constant $C$ such that
\begin{equation}\label{error-est-1GaLS}
\begin{aligned}
\|\uurm-\,&\uurm_\calH\|_{0,\OO} + \|\bl-\bl_H\|_{\bL}\\
&\leqslant C\,\left( h^{s}(|\uu-\uurm|_{s+1,\triH} + |p|_{s,\triH}) + H^m\,|\stress|_{m,\calP} \right)\,.
\end{aligned}
\end{equation}
In addition, $\uu_{Hh}:=\uurm_\calH + T_h(\bl_H) + \hat T_h(\ff)$, $p_{Hh} :=\, T_h^p(\bl_H) + \hat T_h^p\,(\ff)$ and $\stress_{Hh} := 2\,G\,\EE(\uu_{Hh})-p_{Hh}\,\id_d$ satisfy
\begin{equation}\label{error-est-2GaLS}
\begin{aligned}
\|\uu-\,&\uu_{Hh}\|_{1,\calP} + \|p-p_{Hh}\|_{0,\OO} + \|\stress-\stress_{Hh}\|_{0,\OO}\\
&\leqslant C\,\left( h^s\,(|\uu-\uurm|_{s+1,\triH} + |p|_{s,\triH}) + H^m\,|\stress|_{m,\calP} \right)\,.
\end{aligned}
\end{equation}
\end{theorem}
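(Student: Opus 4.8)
The proof splits naturally into the two stated estimates. For \cref{error-est-1GaLS}, the strategy is a standard Strang-type/Céa argument for the discrete saddle-point problem \cref{hybrid-h}-\cref{hybrid-VVRM-h}. Under the hypotheses, \cref{stabilityGaLS} gives the coercivity of $\bm_H \mapsto \sum_K \langle \bm_H, T_h(\bm_H)\rangle_{\dK}$ on $\calN_H$ and the inf-sup condition between $\bL_H$ and $\VVRM$, so the discrete problem is well-posed with stability constants independent of $H$, $h$ and the Poisson ratio. I would first write the error equation: subtract the discrete system from the continuous one, absorbing the consistency mismatch. There are \emph{two} sources of error here — the discrete operators $T_h, \hat T_h$ differ from $T, \hat T$, and $\bL_H \subsetneq \bL$. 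I would introduce an intermediate quantity (e.g. the solution of the discrete global problem but with the exact local operators, or use $T_h$ applied to a suitable interpolant of $\bl$ in $\bL_H$) to separate these. The local consistency error is controlled by \cref{th:localErrorBound} applied with $\bm = \bl$ and $\bq = \ff$, which contributes the $h^s(|\uu-\uurm|_{s+1,\triH}+|p|_{s,\triH})$ term (recalling $\uu - \uurm = T(\bl)+\hat T(\ff)$). The best-approximation error of $\bl$ in $\bL_H$, using \cref{lambda} (so $\bl = \stress\nn^K$ on the skeleton), is bounded by $C H^m |\stress|_{m,\calP}$ — this is precisely the best-approximation result in $\bL_H$ quoted from \cite{10.1093/imanum/drac041}, and it requires $\stress \in H^m(\calP)^{d\times d}$ with $m \le \ell+1$. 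Combining via the abstract saddle-point error bound gives \cref{error-est-1GaLS}.

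For \cref{error-est-2GaLS}, I would start from the decompositions $\uu = \uurm + T(\bl) + \hat T(\ff)$ and $\uu_{Hh} = \uurm_\calH + T_h(\bl_H) + \hat T_h(\ff)$, and likewise for $p$ and $p_{Hh}$. Then
\begin{align*}
\uu - \uu_{Hh} &= (\uurm - \uurm_\calH) + \big(T(\bl) + \hat T(\ff) - T_h(\bl) - \hat T_h(\ff)\big) + \big(T_h(\bl) - T_h(\bl_H)\big)\,.
\end{align*}
The first term is bounded by \cref{error-est-1GaLS}. The middle term is the local approximation error, handled directly by \cref{th:localErrorBound}. The last term is $T_h$ applied to $\bl - \bl_H$, so by the stability estimate \cref{eq:stabilityLemmaTh} it is bounded by $C\|\bl - \bl_H\|_{\bL}$, again controlled by \cref{error-est-1GaLS}. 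The same three-term splitting works for $p - p_{Hh}$ using $T^p_h, \hat T^p_h$ and their stability. For the stress, I would write $\stress - \stress_{Hh} = 2G(\EE(\uu) - \EE(\uu_{Hh})) - (p - p_{Hh})\id_d$ and bound it by the already-established displacement and pressure errors, using $G \in L^\infty$; care is needed because $\EE(\uu) - \EE(\uu_{Hh})$ is only controlled in the broken $\HH^1(\calP)$-norm, which is why the left-hand side carries $\|\uu - \uu_{Hh}\|_{1,\calP}$ rather than the full $H^1(\OO)$-norm.

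**Main obstacle.** The delicate point is the bookkeeping in the error equation for the saddle-point system: the bilinear form $\sum_K \langle \bm_H, T_h(\bl_H)\rangle_{\dK}$ is built from the \emph{discrete} operator $T_h$, so the "consistency error" is not simply a Galerkin orthogonality statement — one must carefully track how $\langle \bm_H, (T - T_h)(\bl)\rangle_{\dK}$ and $\langle \bm_H, (\hat T - \hat T_h)(\ff)\rangle_{\dK}$ enter, and bound these face-duality pairings by $\|\bm_H\|_{\bL}$ times the broken $\HH^1$-norm of the local errors (via the definition \cref{eq:bnd_b} of $\|\cdot\|_{\bL}$, possibly with a trace inequality). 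Making sure all constants remain independent of $H, h$ and non-degenerate as $\nu \to 1/2$ requires that every invoked estimate — \cref{th:localErrorBound}, \cref{eq:stabilityLemmaTh}-\cref{eq:stabilityLemmahTh}, \cref{stabilityGaLS}, and the $\bL_H$ best-approximation bound — already enjoys this property, which they do by construction; the work is in threading them together without introducing a hidden $\upepsilon$-dependence through, e.g., the $\upepsilon$-norm on the pressure. A secondary subtlety is matching the regularity indices: one must verify $1 \le m \le \min\{s,\ell+1\}$ is exactly what the $\bL_H$-approximation result consumes, and that $s \le k$ is what \cref{th:localErrorBound} needs.
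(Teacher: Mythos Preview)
Your overall strategy matches the paper's proof closely: for \cref{error-est-1GaLS} you invoke the saddle-point stability from \cref{stabilityGaLS}, separate the local consistency error (controlled by \cref{th:localErrorBound}) from the best-approximation error of $\bl$ in $\bL_H$ (the result quoted from \cite{10.1093/imanum/drac041}), and combine; for \cref{error-est-2GaLS} you use the same three-way split of $\uu-\uu_{Hh}$ and bound the pieces by \cref{eq:stabilityLemmaTh}, \cref{th:localErrorBound}, and \cref{error-est-1GaLS}, then treat the stress via $\stress-\stress_{Hh}=2G\,\EE(\uu-\uu_{Hh})-(p-p_{Hh})\id_d$.

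There is one genuine gap. In your decomposition
\[
\uu-\uu_{Hh} = (\uurm-\uurm_\calH) + \big[(T-T_h)(\bl)+(\hat T-\hat T_h)(\ff)\big] + T_h(\bl-\bl_H)\,,
\]
you say the first term ``is bounded by \cref{error-est-1GaLS}''. But \cref{error-est-1GaLS} only controls $\|\uurm-\uurm_\calH\|_{0,\OO}$, whereas the left-hand side of \cref{error-est-2GaLS} requires $\|\uurm-\uurm_\calH\|_{1,\calP}$. Rigid body motions are piecewise affine, not piecewise constant, so $|\uurm-\uurm_\calH|_{1,\calP}$ does not vanish, and a naive inverse inequality on each $\VVRM(K)$ would cost a factor $\calH^{-1}$ that destroys the rate. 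The paper closes this by invoking an auxiliary estimate from \cite{10.1093/imanum/drac041},
\[
|\uurm-\uurm_\calH|_{1,\calP} \le C\Big(\|\uurm-\uurm_\calH\|_{0,\OO} + \big|T(\bl)-T_h(\bl_H)+(\hat T-\hat T_h)(\ff)\big|_{1,\calP}\Big),
\]
which bounds the rigid-body seminorm by quantities already under control. Without this (or an equivalent) step, your argument for \cref{error-est-2GaLS} is incomplete.
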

\begin{proof}
\Cref{error-est-1GaLS} follows directly from the proof of the approximation result in \cite{10.1093/imanum/drac041}, using \cref{th:localErrorBound} as the local error estimate.
To prove \cref{error-est-2GaLS}, we decompose the error $\uu-\uu_{Hh}$ as follows
\begin{align*}
	&\|\uu-\uu_{Hh}\|_{1,\calP}\\
	&\leqslant \|\uurm-\uurm_\calH\|_{0,\OO} + |\uurm-\uurm_\calH|_{1,\calP} + \|T(\bl)-T_h(\bl_H) + (\hat T - \hat T_h)(\ff)\|_{1,\calP}\,.
\end{align*}
First, we use
the triangular inequality,
\cref{eq:stabilityLemmaTh},
\cref{eq:localErrorBound} and
\cref{error-est-1GaLS} to obtain
\begin{align*}
	\|\uurm-\uurm_\calH\|_{0,\OO} &+ \|T_h(\bl-\bl_H) + (T-T_h)(\bl) + (\hat T - \hat T_h)(\ff)\|_{1,\calP}\\
	&\leqslant  C\,\left( h^{s}(|\uu-\uurm|_{s+1,\triH} + |p|_{s,\triH}) + H^m\,|\stress|_{m,\calP} \right)\,.
\end{align*}
%
In \cite{10.1093/imanum/drac041}, we proved there exists a constant $C$, depending only on $\OO$, the shape-regularity of $\calP$, and the Poincaré and trace inequality constants, such that
\begin{align*}
	|\uurm-\uurm_\calH|_{1,\calP} \leqslant C\,\left( \|\uurm-\uurm_\calH\|_{0,\OO} + |T(\bl)-T_h(\bl_H) + (\hat T - \hat T_h)(\ff)|_{1,\calP} \right)\,.
\end{align*}
Thus, arguing as before, we conclude
\begin{align*}
	|\uurm-\uurm_\calH|_{1,\calP}
	&\leqslant  C\,\left( h^{s}(|\uu-\uurm|_{s+1,\triH} + |p|_{s,\triH}) + H^m\,|\stress|_{m,\calP} \right)\,.
\end{align*}
We collect the four last expressions to conclude the first part of \cref{error-est-2GaLS}.
To estimate $\|p-p_{Hh}\|_{0,\OO}$, we use \cref{th:localErrorBound} and sum up the contributions on each $K\in \calP$.
To estimate $\|\stress-\stress_{Hh}\|_{0,\OO}$, observe that
$$\|\stress-\stress_{Hh}\|_{0,\OO}
\le 2\,G\,(|\uu-\uu_{Hh}|_{1,\calP}+\|p-p_{Hh}\|_{0,\OO})\,.
$$
The result follows from the previous estimates, and the proof is complete.
\end{proof}

\begin{remark}
The constant $C$ in \cref{eq:localErrorBound} depends on $\ess\inf_{\xx\in K}\nu_0(\xx) \ge \nu_0$.
Since the locking behavior appears for high values of $\nu_0$, i.e., $\nu_0 \approx 1/2$, this does not change the conclusion that the presented local method is free of locking.
Moreover,
if the material property $\upepsilon$ is constant on each $K\in \calP$, one may eliminate this dependency following the proof of \cite[Theorem~3.1]{stabilizedElasticity}.
Since  $\upepsilon$ does not appear explicitly in tho global problem \cref{hybrid-h}-\cref{hybrid-VVRM-h}, and the stability and convergence constants for the formulations \cref{ThGaLS,hThGaLS} do not degenerate when $\upepsilon \to 0$, the MHM method is locking-free.
\end{remark}

\begin{remark}
\Cref{converGaLS} states the MHM method's $(H+h)$-convergence, and that means we improve accuracy by refining the face and local partitions $\calE_H$ and $\calT_h$.
In order to achieve the typical $\calH$-convergence, one must define a family of global partitions $\{\calP_\calH\}_{\calH > 0}{}$ of $\OO$ such that all constants involved in the analysis stays bounded when $\calH \rightarrow 0$.
We refer to \cite{10.1093/imanum/drac041} for a detailed discussion on this topic.
\end{remark}

\begin{remark}
Under local smoothing conditions, one may improve the $\LL^2$-conver-gence order of the displacement error in \cref{th:localErrorBound}, adapting the procedure from the proof of \cite[Theorem 3.1]{stabilizedElasticity}.
To improve global $\LL^2$-convergence order in \cref{converGaLS}, we proceed exactly as in \cite{10.1093/imanum/drac041}.
\end{remark}

\section{Numerical Results}
\label{sec:numResultsElasticity}

This section assesses the MHM method numerically on nearly incompressible materials. We use the following two-dimensional model problem proposed in~\cite{Brenner93}. Let $\OO := [0,1]^2$ be an isotropic elastic domain with shear modulus $G=1$, Poisson's ratio $\nu \in \{0.2, 0.3, 0.4, 0.49, 0.499, 0.4999, 0.49999\}$, and let $\uu = \{u_1,u_2\}$,
\begin{align*}
u_1(x,y) &= (\cos(2 \pi x) - 1) \sin(2 \pi y) + \frac{1-2\,\nu}{2}\,\sin(\pi x) \sin(\pi y)\,,\\
u_2(x,y) &= (1-\cos(2 \pi y)) \sin(2 \pi x) + \frac{1-2\,\nu}{2}\,\sin(\pi x) \sin(\pi y)\,,
\end{align*}
be the exact solution of \cref{elliptic} with $\Gamma_D = \dO$.
The pressure is $p := -\frac{2\,G\,\nu}{1-2\nu}\,\Div\uu = -2\,\pi\, \nu\, \sin(\pi(x + y))$, $\stress := \fStress(\uu)$, and $\ff = \{f_1,f_2\}$ is defined as follows
\begin{align*}
f_1(x,y) &= \pi^2 \left( 4\sin(2\pi y)(2\cos(2\pi x)-1) - \cos(\pi(x+y)) + (1-2\nu)\sin(\pi x)\sin(\pi y) \right),\\
f_2(x,y) &= \pi^2 \left( 4\sin(2\pi x)(1 - 2\cos(2\pi y)) - \cos(\pi(x+y)) + (1-2\nu)\sin(\pi x)\sin(\pi y) \right).
\end{align*}
%

In the following, we verify the theoretical results and show some numerical estimates.
%
The underlying algorithm and the implementation aspects of the MHM method implemented for the test cases were presented in \cite{10.1093/imanum/drac041}.

\subsection{The Poisson locking phenomenon}
\label{subsec:locking}

In this section, we see how high Poisson's ratios influence the precision of the MHM-Ga, MHM-GaLS, stdGalerkin, and GaLS methods.
We use structured triangular partitions in every stage of all methods, and the MHM methods use trivial skeleton meshes $\calE = \calE_H$.
We use stdGalerkin and GaLS with polynomial orders $k=1, 2, 3$.
The MHM methods use $\ell = 1$ in the global level and local finite elements defined by the stdGalerkin and GaLS discretizations with $k=1, 2, 3$.

As throughout the whole paper, $\uu_{Hh}$, $p_{Hh}$ and $\stress_{Hh}$ satisfy \cref{decompositionUh,uh-ph}, where $(\bl_{Hh},\uurm)$ is the solution of \cref{hybrid-h}-\cref{hybrid-VVRM-h}.
In the following, $\uu_h$, $\ww_h$, $\ww_{Hh}$ are the displacement solutions obtained by the GaLS, stdGalerkin and MHM methods, respectively, and $p_h$ and $\stress_h = 2\,G\,(\EE(\uu_h) - p_h\id_2)$ are the pressure and stress solutions obtained from the MHM-GaLS method.

\Cref{fig:errorsVaryingEps} compares the solutions of the four methods for all considered values of $\nu$.
They use the same orders $k = 1,2,3$ and mesh sizes $h = 2^{k-4.5}$, from top to bottom.
The global partition in the MHM methods have size $\calH = 2^{-1.5}$.
For $k=1$, the MHM-Ga method loses accuracy when increasing the Poisson's ratio (top-left).
In all cases, the MHM-Ga improves the accuracy of the stdGalerkin method for a sufficiently high Poisson's ratio.
The MHM-GaLS method does not show a pronounced lack of accuracy in the approximation errors, evidencing its locking-free property.
However, the high Poisson's ratio affects more the MHM-GaLS than the GaLS method for all $k$.
As expected, the GaLS and MHM-GaLS improve the accuracy of the stdGalerkin and MHM-Ga methods, respectively, when the Poisson's ratio is close to $1/2$.

\begin{figure}[!htb]
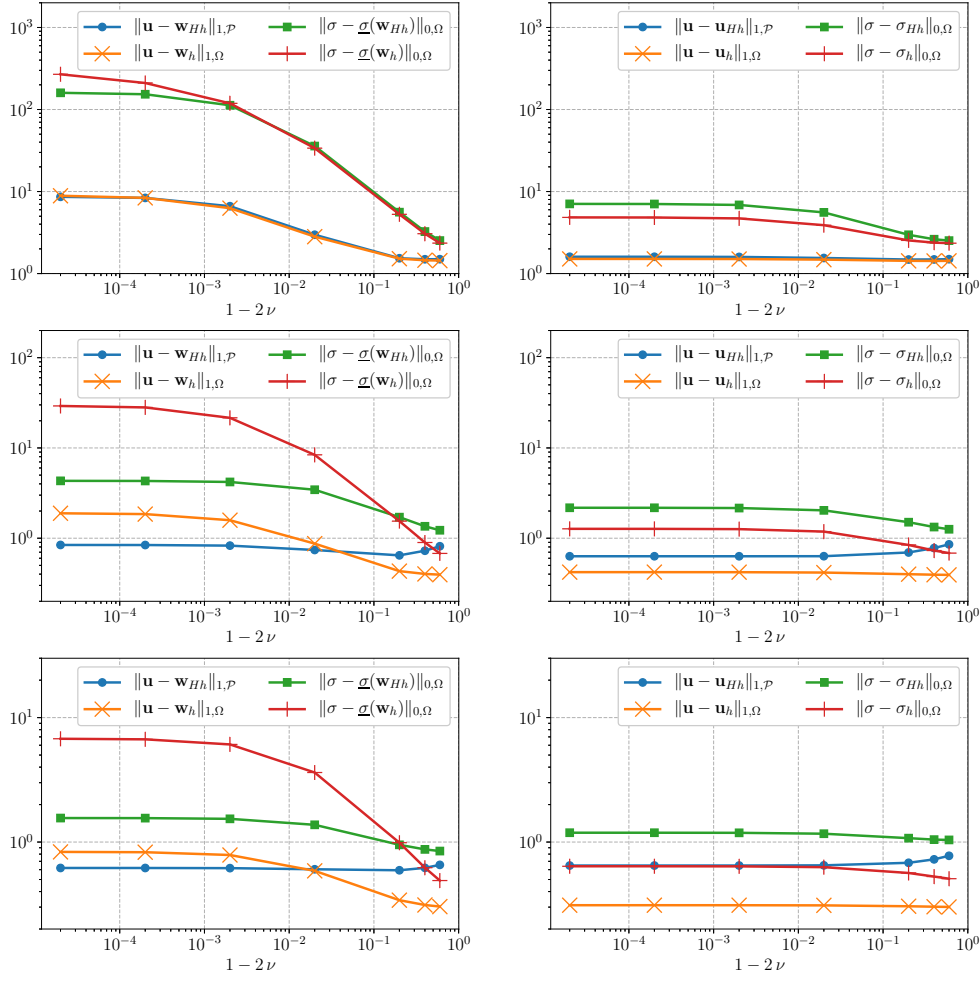

\centering
\mincludegraphics{stdGavsMHMGa_convergenceEps_triangle3_k1}
\hfill
\mincludegraphics{GaLSvsMHMGaLS_convergenceEps_triangle3_k1}
\mincludegraphics{stdGavsMHMGa_convergenceEps_triangle3_k2}
\hfill
\mincludegraphics{GaLSvsMHMGaLS_convergenceEps_triangle3_k2}
\mincludegraphics{stdGavsMHMGa_convergenceEps_triangle3_k3}
\hfill
\mincludegraphics{GaLSvsMHMGaLS_convergenceEps_triangle3_k3}
\caption{$\upepsilon$-convergence in the four different methods.
Orders $k = 1,2,3$ and mesh sizes $h = 2^{k-4.5}$, from top to bottom.
The global partition's size in the MHM methods is $H = 2^{-1.5}$.}
\label{fig:errorsVaryingEps}
\end{figure}



\Cref{fig:errorsVaryinghGaLS} compares the solutions of the MHM-GaLS and GaLS methods using $\nu = 0.4999$, $\calH = 2^{3-k}\,h$ for the MHM-GaLS, and $k = 1,2$.
Both methods are locking-free, as theory predicts. 
However, some curves on both methods approach the theoretical $O(h^2)$ convergence very slowly.
We verified this is not related to a bad choice of parameter $\alpha$.

\begin{figure}[!htb]
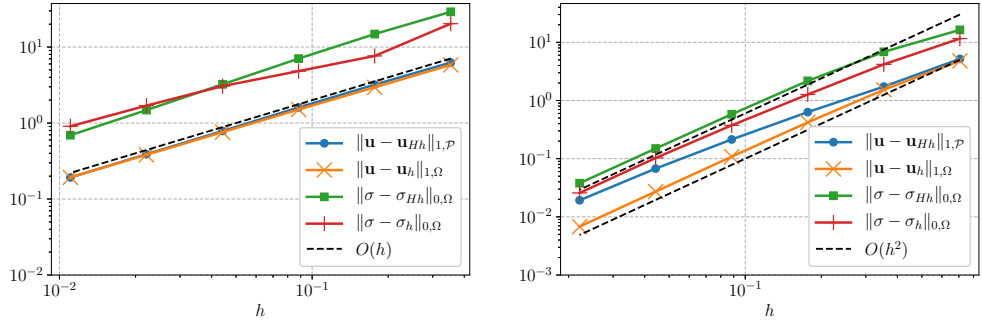

\centering
\mincludegraphics{GaLSvsMHMGaLS_convergenceh_k1}
\hfill
\mincludegraphics{GaLSvsMHMGaLS_convergenceh_k2}
\caption{$h$-convergence in the MHM-GaLS and GaLS methods using $\nu = 0.4999$, and
orders $k = 1$ (left) and $2$ (right).
The size of the global partition in the MHM-GaLS is $\calH = 2^{3-k}\,h$.}
\label{fig:errorsVaryinghGaLS}
\end{figure}

\subsection{$H$-convergence tests}
\label{subsec:analytical_cases}

In this section, we show numerical tests to verify the MHM-GaLS method's $H$-convergence rates, and compare the results with the MHM-Ga method.
We use a structured global partition with 32 triangles ($\calH = 2^{-1.5}$), and polynomial orders $\ell = 1$ and $k=1, 2, 3$.
We highlight that we study the convergence through a skeleton-based $H$-refinement, keeping $\calH$ fixed.
The second-level meshes' sizes are $h = 2^{k-3}\,H$, corresponding to the minimum allowed for well-posedness (see \cref{th:supVVhLowk}).
The results use the problem from \cref{subsec:locking} with $\nu = 0.4999$ and $k=1, 2, 3$.

\Crefrange{tab:HconvMHMGaLSk1}{tab:HconvMHMGak3} show the displacement, stress and pressure errors, and their respective numerical convergence orders.
We measure the errors $\mathbf e_{Hh} := \uu - \uu_{Hh}$, $\mathbf e_{Hh}^s := \stress - \stress_{Hh}$, $e_{Hh}^p := p - p_{Hh}$, $\mathbf g_{Hh} := \uu - \ww_{Hh}$, $\mathbf g_{Hh}^s := \stress - \fStress(\ww_{Hh})$ using definitions from \cref{subsec:locking}.
We employed a usual procedure to obtain these orders, namely, we divide the errors in one line by the errors in the line above, and apply the logarithm base 2.

\begin{table}[!htb]
\caption{$H$-convergence in the MHM-GaLS using $k=1$. `ord' is the numerical convergence order.}
\label{tab:HconvMHMGaLSk1}
\sisetup{
	round-mode=figures,
	round-precision=3,
	scientific-notation=true,
	exponent-product = \cdot}
\centering
\begin{small}
\begin{tabular}{c|c|c|c|c|c|c|c|c}
$H$ & $\|\mathbf e_{Hh}\|_{0,\Omega}$ & ord & $|\mathbf e_{Hh}|_{1,\mathcal{P}}$ & ord & $\|\mathbf e^s_{Hh}\|_{0,\Omega}$ & ord & $\| e^p_{Hh}\|_{0,\Omega}$ & ord \\\hline
$\calH/2^{0}$ & \num{5.048827e-02} & - & \num{1.582548e+00} & - & \num{4.769215e+00} & - & \num{2.742063e+00} & - \\
$\calH/2^{1}$ & \num{1.314001e-02} & 1.94 & \num{7.477040e-01} & 1.08 & \num{1.855571e+00} & 1.36 & \num{9.502105e-01} & 1.53 \\
$\calH/2^{2}$ & \num{3.218788e-03} & 2.03 & \num{3.664956e-01} & 1.03 & \num{8.475293e-01} & 1.13 & \num{4.067385e-01} & 1.22 \\
$\calH/2^{3}$ & \num{7.847502e-04} & 2.04 & \num{1.816250e-01} & 1.01 & \num{3.830576e-01} & 1.15 & \num{1.632677e-01} & 1.32 \\
$\calH/2^{4}$ & \num{1.929612e-04} & 2.02 & \num{9.046751e-02} & 1.01 & \num{1.738870e-01} & 1.14 & \num{6.090531e-02} & 1.42 \\
$\calH/2^{5}$ & \num{4.779859e-05} & 2.01 & \num{4.515416e-02} & 1.00 & \num{8.125988e-02} & 1.10 & \num{2.205976e-02} & 1.47
\end{tabular}
\end{small}
\sisetup{scientific-notation=false}
\end{table}

\begin{table}[!htb]
\caption{$H$-convergence in the MHM-GaLS using $k=2$. `ord' is the numerical convergence order.}
\label{tab:HconvMHMGaLSk2}
\sisetup{
	round-mode=figures,
	round-precision=3,
	scientific-notation=true,
	exponent-product = \cdot}
\centering
\begin{small}
\begin{tabular}{c|c|c|c|c|c|c|c|c}
$H$ & $\|\mathbf e_{Hh}\|_{0,\Omega}$ & ord & $|\mathbf e_{Hh}|_{1,\mathcal{P}}$ & ord & $\|\mathbf e^s_{Hh}\|_{0,\Omega}$ & ord & $\|e^p_{Hh}\|_{0,\Omega}$ & ord \\\hline
$\calH/2^{0}$ & \num{2.416143e-02} & - & \num{8.462417e-01} & - & \num{1.868701e+00} & - & \num{9.305267e-01} & - \\
$\calH/2^{1}$ & \num{2.162101e-03} & 3.48 & \num{1.708587e-01} & 2.31 & \num{4.351330e-01} & 2.10 & \num{2.233061e-01} & 2.06 \\
$\calH/2^{2}$ & \num{2.228503e-04} & 3.28 & \num{3.790887e-02} & 2.17 & \num{1.232338e-01} & 1.82 & \num{6.804818e-02} & 1.71 \\
$\calH/2^{3}$ & \num{2.276348e-05} & 3.29 & \num{8.332302e-03} & 2.19 & \num{3.517234e-02} & 1.81 & \num{2.041021e-02} & 1.74 \\
$\calH/2^{4}$ & \num{2.351036e-06} & 3.28 & \num{1.864617e-03} & 2.16 & \num{9.513409e-03} & 1.89 & \num{5.645561e-03} & 1.85 \\
$\calH/2^{5}$ & \num{2.540853e-07} & 3.21 & \num{4.327648e-04} & 2.11 & \num{2.472438e-03} & 1.94 & \num{1.481480e-03} & 1.93
\end{tabular}
\end{small}
\sisetup{scientific-notation=false}
\end{table}

\begin{table}[!htb]
\caption{$H$-convergence in the MHM-GaLS using $k=3$. `ord' is the numerical convergence order.}
\label{tab:HconvMHMGaLSk3}
\sisetup{
	round-mode=figures,
	round-precision=3,
	scientific-notation=true,
	exponent-product = \cdot}
\centering
\begin{small}
\begin{tabular}{c|c|c|c|c|c|c|c|c}
$H$ & $\|\mathbf e_{Hh}\|_{0,\Omega}$ & ord & $|\mathbf e_{Hh}|_{1,\mathcal{P}}$ & ord & $\|\mathbf e^s_{Hh}\|_{0,\Omega}$ & ord & $\|e^p_{Hh}\|_{0,\Omega}$ & ord \\\hline
$\calH/2^{0}$ & \num{2.777963e-02} & - & \num{8.990255e-01} & - & \num{1.571036e+00} & - & \num{7.085649e-01} & - \\
$\calH/2^{1}$ & \num{2.116110e-03} & 3.71 & \num{1.414725e-01} & 2.67 & \num{2.814937e-01} & 2.48 & \num{1.270090e-01} & 2.48 \\
$\calH/2^{2}$ & \num{1.933996e-04} & 3.45 & \num{2.645930e-02} & 2.42 & \num{5.561834e-02} & 2.34 & \num{2.538509e-02} & 2.32 \\
$\calH/2^{3}$ & \num{1.770981e-05} & 3.45 & \num{4.861420e-03} & 2.44 & \num{1.033594e-02} & 2.43 & \num{4.798035e-03} & 2.40 \\
$\calH/2^{4}$ & \num{1.583837e-06} & 3.48 & \num{8.701950e-04} & 2.48 & \num{1.856513e-03} & 2.48 & \num{8.680592e-04} & 2.47 \\
$\calH/2^{5}$ & \num{1.402840e-07} & 3.50 & \num{1.542378e-04} & 2.50 & \num{3.297074e-04} & 2.49 & \num{1.546014e-04} & 2.49
\end{tabular}
\end{small}
\sisetup{scientific-notation=false}
\end{table}

\begin{table}[!htb]
\caption{$H$-convergence in the MHM-Ga using $k=1$. `ord' is the numerical convergence order.}
\label{tab:HconvMHMGak1}
\sisetup{
	round-mode=figures,
	round-precision=3,
	scientific-notation=true,
	exponent-product = \cdot}
\centering
\begin{small}
\begin{tabular}{c|c|c|c|c|c|c}
$H$ & $\|\mathbf g_{Hh}\|_{0,\Omega}$ & ord & $|\mathbf g_{Hh}|_{1,\mathcal{P}}$ & ord & $\|\mathbf g^s_{Hh}\|_{0,\Omega}$ & ord \\\hline
$\calH/2^{0}$ & \num{8.757391e-01} & - & \num{7.285279e+00} & - & \num{1.237460e+02} & - \\
$\calH/2^{1}$ & \num{9.896605e-01} & -0.18 & \num{7.511295e+00} & -0.04 & \num{2.689118e+02} & -1.12 \\
$\calH/2^{2}$ & \num{7.880138e-01} & 0.33 & \num{5.915097e+00} & 0.34 & \num{3.931797e+02} & -0.55 \\
$\calH/2^{3}$ & \num{4.449403e-01} & 0.82 & \num{3.483533e+00} & 0.76 & \num{3.979385e+02} & -0.02 \\
$\calH/2^{4}$ & \num{1.802393e-01} & 1.30 & \num{1.551251e+00} & 1.17 & \num{2.923860e+02} & 0.44 \\
$\calH/2^{5}$ & \num{5.689783e-02} & 1.66 & \num{5.394682e-01} & 1.52 & \num{1.750894e+02} & 0.74
\end{tabular}
\end{small}
\sisetup{scientific-notation=false}
\end{table}

\begin{table}[!htb]
\caption{$H$-convergence in the MHM-Ga using $k=2$. `ord' is the numerical convergence order.}
\label{tab:HconvMHMGak2}
\sisetup{
	round-mode=figures,
	round-precision=3,
	scientific-notation=true,
	exponent-product = \cdot}
\centering
\begin{small}
\begin{tabular}{c|c|c|c|c|c|c}
$H$ & $\|\mathbf g_{Hh}\|_{0,\Omega}$ & ord & $|\mathbf g_{Hh}|_{1,\mathcal{P}}$ & ord & $\|\mathbf g^s_{Hh}\|_{0,\Omega}$ & ord \\\hline
$\calH/2^{0}$ & \num{2.683079e-02} & - & \num{8.632081e-01} & - & \num{3.029164e+00} & - \\
$\calH/2^{1}$ & \num{2.959724e-03} & 3.18 & \num{2.550048e-01} & 1.76 & \num{1.243074e+00} & 1.29 \\
$\calH/2^{2}$ & \num{5.216164e-04} & 2.50 & \num{1.011932e-01} & 1.33 & \num{9.344063e-01} & 0.41 \\
$\calH/2^{3}$ & \num{1.161420e-04} & 2.17 & \num{4.616516e-02} & 1.13 & \num{8.671651e-01} & 0.11 \\
$\calH/2^{4}$ & \num{2.578072e-05} & 2.17 & \num{2.063337e-02} & 1.16 & \num{7.697612e-01} & 0.17 \\
$\calH/2^{5}$ & \num{4.727642e-06} & 2.45 & \num{7.669352e-03} & 1.43 & \num{5.480134e-01} & 0.49
\end{tabular}
\end{small}
\sisetup{scientific-notation=false}
\end{table}

\begin{table}[!htb]
\caption{$H$-convergence in the MHM-Ga using $k=3$. `ord' is the numerical convergence order.}
\label{tab:HconvMHMGak3}
\sisetup{
	round-mode=figures,
	round-precision=3,
	scientific-notation=true,
	exponent-product = \cdot}
\centering
\begin{small}
\begin{tabular}{c|c|c|c|c|c|c}
$H$ & $\|\mathbf g_{Hh}\|_{0,\Omega}$ & ord & $|\mathbf g_{Hh}|_{1,\mathcal{P}}$ & ord & $\|\mathbf g^s_{Hh}\|_{0,\Omega}$ & ord \\\hline
$\calH/2^{0}$ & \num{2.502421e-02} & - & \num{8.055023e-01} & - & \num{1.435549e+00} & - \\
$\calH/2^{1}$ & \num{1.960199e-03} & 3.67 & \num{1.326745e-01} & 2.60 & \num{3.713559e-01} & 1.95 \\
$\calH/2^{2}$ & \num{2.067150e-04} & 3.25 & \num{2.732785e-02} & 2.28 & \num{1.723788e-01} & 1.11 \\
$\calH/2^{3}$ & \num{2.496859e-05} & 3.05 & \num{6.368240e-03} & 2.10 & \num{1.241695e-01} & 0.47 \\
$\calH/2^{4}$ & \num{2.807368e-06} & 3.15 & \num{1.392237e-03} & 2.19 & \num{6.976607e-02} & 0.83 \\
$\calH/2^{5}$ & \num{2.212950e-07} & 3.67 & \num{2.206648e-04} & 2.66 & \num{2.165042e-02} & 1.69
\end{tabular}
\end{small}
\sisetup{scientific-notation=false}
\end{table}

We observe no locking effect on \Cref{tab:HconvMHMGaLSk1,tab:HconvMHMGaLSk2,tab:HconvMHMGaLSk3}, as the convergence orders stay close to and, in some cases, above the predicted order (see \cref{converGaLS}).
In \cref{tab:HconvMHMGaLSk1,tab:HconvMHMGaLSk2}, we observe higher and lower convergence orders for $\|\mathbf e^p_{Hh}\|_{0,\Omega}$, respectively, when compared to the theory.
These fluctuations both influence the respective convergence order of $\|\mathbf e^s_{Hh}\|_{0,\Omega}$ since $\stress_{Hh}$ depends on $p_{Hh}$.
An additional $O(H^{0.5})$ convergence appears in \cref{tab:HconvMHMGaLSk3} for all measured errors, which is in accordance with what was found in other MHM methods, e.g., \cite{10.1093/imanum/drac041}.

\Cref{tab:HconvMHMGak1,tab:HconvMHMGak2,tab:HconvMHMGak3} shows the MHM-Ga methods' convergence orders that, theoretically \cite{10.1093/imanum/drac041}, should also satisfy the same estimate \cref{error-est-2GaLS}.
We observe a convergence losses for all orders $k$.
Since the MHM-GaLS method do not present such issues, and they are less evident in higher orders ($k=3$), we conclude the convergence losses for low $H$ are due to the Poisson's locking.
Finally, we shall highlight the MHM-GaLS recovered highly accurate discrete stress fields and, on nearly-incompressible materials, it improves considerably the MHM-Ga's stress field.

\section{Conclusions}
\label{sec:concl}

We presented a new class of stabilized finite elements, based on augmenting the Galerkin formulation with least squares terms, for the MHM method applied to isotropic elasticity.
It allows for the use of heterogeneous shear modulus $G \in W^{1,\infty}(\calT_h)$ and Poisson's ratio $\nu \in L^\infty(\OO)$.
The well definition of the finite elements relies on the choice of positive stabilization parameters $\alpha_K$, for $K \in \calP$, depending only on the polynomial order $k$, $\ess\inf_{\xx\in K}G(\xx)$, and $\|G\|_{W^{1,\infty}(\calT_h)}$.
%
We proved the resulting MHM method is Poisson locking-free.
The displacement and pressure approximations converge at optimal rates, in their natural norms, to the exact solution, and the rates are independent of $\nu$.
Consequently, the stress convergence rate in the $L^2$-norm does not depend on $\nu$ either.
%
We verify numerically the theoretical convergence rates and locking-free property.

The numerical tests showed that the low-order MHM-Ga method (two-level MHM method with Galerkin finite element) loses precision when the Poisson's ratio gets closer to $1/2$.
Besides that, the MHM-Ga solutions were more accurate on our example than the Galerkin solutions for equivalent mesh sizes.
When compared to the MHM-GaLS method (two-level MHM method with stabilized finite element), we observed an accuracy improvement on all measured errors and, in particular, the stress was significantly better approximated.

\appendix

\section{Auxiliary results}
\label{sec:auxiliary}

We start presenting	 two lemmas that are auxiliary in the proof of \cref{sec:wellPosednessTph}.

\begin{lemma}\label{th:auxiliarAlphaLemma}
For every $\uu_h \in \tildeVV_h(K)$,
\begin{align}\label{eq:auxiliarAlphaLemma}
\sum_{\tau\in\calT_h^K} h_\tau^2\|\bdiv\,(2G\,\suh)\|_{0,\tau}^2
\leqslant C_I^{-1}\, \|2G\|_{1,\infty,\calT_h^K}^2 \|\EE(\uu_h)\|_{0,K}^2\,.
\end{align}
\end{lemma}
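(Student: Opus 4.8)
The plan is to work simplex by simplex on the local mesh $\calT_h^K$, where $G$ carries the $W^{1,\infty}$-regularity required for a Leibniz rule, and then to invoke the inverse inequality \cref{eq:ineqCI} directly. Fix $\uu_h\in\tildeVV_h(K)$ and write $M:=\suh$, a symmetric matrix field that is polynomial on each $\tau\in\calT_h^K$. Since $G$ belongs to $W^{1,\infty}(\tau)$ for each $\tau\in\calT_h^K$, the product rule gives $\bdiv(2G\,M)=2G\,\bdiv M + M\,\nabla(2G)$ on each such $\tau$, the $i$-th component of $M\,\nabla(2G)$ being $\sum_j M_{ij}\,\partial_j(2G)$. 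Bounding $|2G|$ and $|\nabla(2G)|$ a.e.\ on $\tau$ by their $L^\infty(\tau)$-norms then yields the pointwise estimate $|\bdiv(2G\,M)|\le \|2G\|_{L^\infty(\tau)}\,|\bdiv M| + \|\nabla(2G)\|_{L^\infty(\tau)}\,|M|$.

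The key step is to read the right-hand side as a dot product of the pairs $\bigl(\|2G\|_{L^\infty(\tau)},\,h_K\,\|\nabla(2G)\|_{L^\infty(\tau)}\bigr)$ and $\bigl(|\bdiv M|,\,h_K^{-1}\,|M|\bigr)$, and to apply the Cauchy--Schwarz inequality; the weight $h_K$ is placed precisely so that the two factors match the quantities appearing in the broken norm \cref{eq:normW1infty} and in \cref{eq:ineqCI}. This gives, a.e.\ on $\tau$,
\[
|\bdiv(2G\,M)|^2 \le \|2G\|_{1,\infty,\calT_h^K}^2\,\bigl(|\bdiv M|^2 + h_K^{-2}\,|M|^2\bigr)\,.
\]
Integrating over $\tau$, multiplying by $h_\tau^2$, and summing over $\tau\in\calT_h^K$, the factor $\|2G\|_{1,\infty,\calT_h^K}^2$ comes out of the sum, leaving $\sum_{\tau}h_\tau^2\bigl(\|\bdiv\suh\|_{0,\tau}^2 + h_K^{-2}\|\suh\|_{0,\tau}^2\bigr)$, which \cref{eq:ineqCI} bounds by $C_I^{-1}\|\suh\|_{0,K}^2$. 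This is exactly \cref{eq:auxiliarAlphaLemma}.

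I do not expect a genuine obstacle here. The two points needing care are that the Leibniz rule must be applied on each $\tau$ individually — there is no global product rule, since $G$ need only be piecewise $W^{1,\infty}$ on $\calT_h^K$ — and the exact placement of the weight $h_K$ in the Cauchy--Schwarz step, which is what makes the constant come out as the stated $C_I^{-1}\,\|2G\|_{1,\infty,\calT_h^K}^2$ without a spurious factor. The degenerate cases $\|2G\|_{L^\infty(\tau)}=0$ or $\nabla(2G)|_\tau\equiv 0$ are handled by keeping only the surviving term, and the same bound holds.
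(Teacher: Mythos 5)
Your proposal is correct and follows essentially the same route as the paper: Leibniz rule element by element, the weighted Cauchy--Schwarz step that pairs $\|2G\|_{L^\infty(\tau)}$ with $h_K\|\nabla(2G)\|_{L^\infty(\tau)}$ so the broken norm \cref{eq:normW1infty} appears, and then the inverse inequality \cref{eq:ineqCI}. The only cosmetic difference is that you perform the Cauchy--Schwarz bound pointwise before integrating, whereas the paper first applies the triangle inequality in $L^2(\tau)$ and then the same weighted Cauchy--Schwarz on the resulting norms; the estimates and constants are identical.
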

\begin{proof}
Notice that
\begin{align*}
\|\bdiv\,(2G\,\suh)\|_{0,\tau} &= \| \suh \nabla(2G) + 2G\,\bdiv(\suh)\|_{0,\tau}\\
&\leqslant \| \suh\|_{0,\tau} \|\nabla(2G)\|_{L^\infty(\tau)} + \|2G\|_{L^\infty(\tau)}\|\bdiv(\suh)\|_{0,\tau}\\
&\leqslant \|2G\|_{1,\infty,\calT_h^K} \left(\frac{1}{h_K^2}\|\suh\|_{0,\tau}^2 + \|\bdiv\,\suh\|_{0,\tau}^2\right)^{\frac12}\,,
\end{align*}
where we used \cref{eq:normW1infty}.
Then, we can then use \cref{eq:ineqCI} to obtain
\begin{align*}
\nonumber \sum_{\tau\in\calT_h^K} h_\tau^2 \|\bdiv\,(2G\,\suh)\|_{0,\tau}^2
&\leqslant \|2G\|_{1,\infty,\calT_h^K}^2 \sum_{\tau\in\calT_h^K} \frac{h_\tau^2}{h_K^2}\|\suh\|_{0,\tau}^2 + h_\tau^2\,\|\bdiv\,\suh\|_{0,\tau}^2\\
&\leqslant C_I^{-1}\, \|2G\|_{1,\infty,\calT_h^K}^2 \|\suh\|_{0,K}^2\,.
\end{align*}
\end{proof}

\begin{lemma}\label{th:auxiliarAlphaLemma2}
There exists $C_{0,K} > 0$ depending only on $k$, $d$ and $\|2G\|_{1,\infty,\calT_h^K}$ such that, for every $(\uu_h,p_h) \in \tildeVV_h(K) \times Q_h(K)$,
\begin{align*}
\alpha_K\sum_{\tau\in\calT_h^K} h_\tau^2\|\bdiv\,(2G\,\suh-p_hI)\|_{0,\tau}^2
\leqslant 4 G_{0,K}\,\left(\|\suh\|_{0,K}^2 + C_{0,K}\,\|p_h\|_{0,K}^2\right)
\end{align*}
\end{lemma}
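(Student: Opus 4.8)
The plan is to decompose the divergence on each simplex $\tau\in\calT_h^K$, control the strain contribution by \cref{th:auxiliarAlphaLemma} and the pressure contribution by a standard inverse estimate, and then absorb both terms using the upper bound \cref{eq:alphaHyp} on the stabilization parameter $\alpha_K$.

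First, since $\bdiv(p_h\,\id)=\nabla p_h$, I would write $\bdiv(2G\,\suh-p_h\,\id)=\bdiv(2G\,\suh)-\nabla p_h$ on each $\tau$ and use the elementary inequality $(a+b)^2\le 2a^2+2b^2$ to obtain
\[
\alpha_K\sum_{\tau\in\calT_h^K} h_\tau^2\,\|\bdiv(2G\,\suh-p_h\,\id)\|_{0,\tau}^2
\le 2\alpha_K\sum_{\tau\in\calT_h^K} h_\tau^2\,\|\bdiv(2G\,\suh)\|_{0,\tau}^2
+2\alpha_K\sum_{\tau\in\calT_h^K} h_\tau^2\,\|\nabla p_h\|_{0,\tau}^2 .
\]
For the first sum, \cref{th:auxiliarAlphaLemma} yields the bound $2\alpha_K C_I^{-1}\|2G\|_{1,\infty,\calT_h^K}^2\,\|\suh\|_{0,K}^2$. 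Using $\|2G\|_{1,\infty,\calT_h^K}=2\,\|G\|_{1,\infty,\calT_h^K}$ together with \cref{eq:alphaHyp}, the prefactor $2\alpha_K C_I^{-1}\|2G\|_{1,\infty,\calT_h^K}^2=8\alpha_K C_I^{-1}\|G\|_{1,\infty,\calT_h^K}^2$ is strictly smaller than $4\,G_{0,K}$, so the first sum is at most $4\,G_{0,K}\,\|\suh\|_{0,K}^2$.

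For the second sum, I would invoke the standard inverse inequality $\|\nabla p_h\|_{0,\tau}\le C_{\mathrm{inv}}\,h_\tau^{-1}\,\|p_h\|_{0,\tau}$, valid for any $p_h$ whose restriction to $\tau$ lies in $\PP_k(\tau)$, where $C_{\mathrm{inv}}$ depends only on $k$ and $d$ (cf. \cite[Lemma~1.138]{ernGuermondFEM}); summing over $\tau$ gives $\sum_{\tau\in\calT_h^K} h_\tau^2\,\|\nabla p_h\|_{0,\tau}^2\le C_{\mathrm{inv}}^2\,\|p_h\|_{0,K}^2$. Hence the second sum is bounded by $2\alpha_K C_{\mathrm{inv}}^2\,\|p_h\|_{0,K}^2$, and one more application of \cref{eq:alphaHyp} shows this is at most $4\,G_{0,K}\,C_{0,K}\,\|p_h\|_{0,K}^2$ with $C_{0,K}:=C_I\,C_{\mathrm{inv}}^2/\|2G\|_{1,\infty,\calT_h^K}^2$. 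Adding the two estimates gives the claim, and confirms that $C_{0,K}$ depends only on $k$, $d$ and $\|2G\|_{1,\infty,\calT_h^K}$.

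The argument is essentially bookkeeping; the only point requiring a little care is to verify that the factor $G_{0,K}$ coming from the bound on $\alpha_K$ cancels the $G_{0,K}$ appearing on the right-hand side, so that $C_{0,K}$ carries no hidden dependence on $G_{0,K}$ or on the mesh sizes.
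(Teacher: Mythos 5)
Your argument is correct and follows essentially the same route as the paper: split $\bdiv(2G\,\suh-p_h\id)$ with the factor-2 triangle inequality, bound the strain part via \cref{th:auxiliarAlphaLemma}, the pressure part via the standard inverse estimate on each $\tau$, and absorb both prefactors using \cref{eq:alphaHyp}; your constant $C_{0,K}=C_I\,C_{\mathrm{inv}}^2/\|2G\|_{1,\infty,\calT_h^K}^2$ coincides with the paper's $C_I/(C\,\|2G\|_{1,\infty,\calT_h^K}^2)$ upon identifying $C=C_{\mathrm{inv}}^{-2}$. No gaps.
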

\begin{proof}
Since
\begin{align*}
\|\bdiv\,(2G\,\suh-p_hI)\|_{0,\tau}^2
&\leqslant 2(\|\bdiv\,(2G\,\suh)\|_{0,\tau}^2 + \|\nabla p_h\|_{0,\tau}^2)\,,
\end{align*}
we use \cref{th:auxiliarAlphaLemma}, \cref{eq:alphaHyp}, and a classical inverse inequality
\begin{align*}
C \sum_{\tau\in\calT_h^K} h_\tau^2 \|\nabla p_h\|_{0,\tau}^2
\leqslant \|p_h\|_{0,K}^2\,,
\end{align*}
(see \cite[Lemma 1.138]{ernGuermondFEM})
to obtain
\begin{align*}
&\alpha_K\sum_{\tau\in\calT_h^K} h_\tau^2\|\bdiv\,(2G\,\suh-p_hI)\|_{0,\tau}^2\\
&\qquad\qquad\leqslant 4\, G_{0,K}\,\left(\|\EE(\uu_h)\|_{0,K}^2 + C^{-1}\,\frac{1}{\|2G\|_{1,\infty,\calT_h^K}^2}\, C_I\, \|p_h\|_{0,K}^2\right)
\end{align*}
Therefore, the proof is complete with $C_{0,K} := C_I/\left(C\,\|2G\|_{1,\infty,\calT_h^K}^2\right)$.
\end{proof}

The following result is auxiliary in the proof of the stability of $B_K$.

\begin{lemma}\label{preStabilityLemma}
There are positive constants $C_1$ and $C_2$ such that, for all $p_h \in Q_h(K)$, there exists $\tilde\vv_h \in\tildeVV_h(K)$ satisfying
\begin{align}\label{eq:preStabilityLemma}
\frac{1}{\|\tilde\vv_h\|_{1,K}}\int_K p_h\,(\Div\,\tilde\vv_h) \dif x \;\geqslant\; C_1 \|p_h\|_{0,K} - C_2 |p_h|_{h,K}\,.
\end{align}
\end{lemma}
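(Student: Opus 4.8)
The plan is to build the test function $\tilde\vv_h$ in two parts — one controlling the mean value $\bar p_h := |K|^{-1}\int_K p_h\dif x$ of $p_h$, the other controlling the zero-mean fluctuation $p_h^0 := p_h-\bar p_h$ — and then to push the result into $\tildeVV_h(K)$ using that $\Div$ annihilates $\VVRM(K)$.

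First I would reduce to test functions in $\VV_h(K)$ rather than in $\tildeVV_h(K)$. For $\vv_h\in\VV_h(K)$, set $\tilde\vv_h := \vv_h-\Pi_{RM}\vv_h\in\tildeVV_h(K)$. Since $\Div\vvrm = \trace\EE(\vvrm)=0$ for every $\vvrm\in\VVRM(K)$, we have $\int_K p_h\,\Div\tilde\vv_h = \int_K p_h\,\Div\vv_h$; and combining \cref{eq:stabilityVsum} elementwise, the Korn inequality \cref{eq:korn2ndlocal}, and $C_{korn,K}\geqslant 1$, one gets $\|\tilde\vv_h\|_{1,K}\leqslant C_{korn,K}\|\vv_h\|_{1,K}$. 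Hence it suffices to exhibit some $\vv_h\in\VV_h(K)$ with $\int_K p_h\,\Div\vv_h\geqslant \|p_h\|_{0,K}\bigl(C_1'\|p_h\|_{0,K}-C_2'|p_h|_{h,K}\bigr)$ and $\|\vv_h\|_{1,K}\leqslant C_K^\ast\|p_h\|_{0,K}$.

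For the fluctuation I would invoke the Bogovskii--Ne\v{c}as right inverse of the divergence on the Lipschitz polytope $K$: there is $\vv^0\in\HH^1_0(K)$ with $\Div\vv^0=p_h^0$ and $\|\vv^0\|_{1,K}\leqslant C_K\|p_h^0\|_{0,K}$. Let $\vv_h^0\in\VV_h(K)$ be a Scott--Zhang interpolant of $\vv^0$ built so as to preserve homogeneous boundary values \cite{scottZhang1990Interpolation}, so that $\vv_h^0=\mathbf 0$ on $\partial K$, $\|\vv_h^0\|_{1,K}\leqslant C\|\vv^0\|_{1,K}$, and $\|\vv^0-\vv_h^0\|_{0,\tau}\leqslant C\,h_\tau\,|\vv^0|_{1,\omega_\tau}$ on each $\tau\in\calT_h^K$, with $\omega_\tau$ its element patch. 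Because $p_h$ and $\vv^0-\vv_h^0$ are continuous across interelement faces and both vanish on $\partial K$, element-by-element integration by parts kills every boundary contribution and leaves $\int_K p_h\,\Div(\vv^0-\vv_h^0)=-\sum_{\tau\in\calT_h^K}\int_\tau \nabla p_h\cdot(\vv^0-\vv_h^0)\dif x$; Cauchy--Schwarz over the elements, the finite overlap of the patches, and \cref{hNorm} bound this by $C\,|p_h|_{h,K}\,\|p_h^0\|_{0,K}$, while $\int_K p_h\,\Div\vv^0=\int_K p_h\,p_h^0\dif x=\|p_h^0\|_{0,K}^2$. Hence $\int_K p_h\,\Div\vv_h^0\geqslant\|p_h^0\|_{0,K}^2-C\,|p_h|_{h,K}\,\|p_h^0\|_{0,K}$.

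For the mean value I would simply take $\vv_h^1:=\bar p_h\,(\xx-\xx_K)\in\PP_1(K)^d\subset\VV_h(K)$, with $\xx_K$ the centroid of $K$, for which $\Div\vv_h^1=d\,\bar p_h$, hence $\int_K p_h\,\Div\vv_h^1=d\,|K|\,\bar p_h^2$, while $\|\vv_h^1\|_{1,K}\leqslant C_K\,|\bar p_h|\leqslant C_K\,|K|^{-1/2}\,\|p_h\|_{0,K}$. Setting $\vv_h:=d\,\vv_h^0+\vv_h^1$ and using the Pythagorean identity $\|p_h\|_{0,K}^2=\|p_h^0\|_{0,K}^2+|K|\,\bar p_h^2$ gives $\int_K p_h\,\Div\vv_h\geqslant d\,\|p_h\|_{0,K}^2-d\,C\,|p_h|_{h,K}\,\|p_h\|_{0,K}$ together with $\|\vv_h\|_{1,K}\leqslant C_K^\ast\,\|p_h\|_{0,K}$; dividing by $\|\tilde\vv_h\|_{1,K}\leqslant C_{korn,K}C_K^\ast\|p_h\|_{0,K}$ for $\tilde\vv_h=\vv_h-\Pi_{RM}\vv_h$ yields \cref{eq:preStabilityLemma} with $C_1:=d/(C_{korn,K}C_K^\ast)$ and $C_2:=dC/(C_{korn,K}C_K^\ast)$ (when $p_h=\mathbf 0$ any nonzero $\tilde\vv_h$ works). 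The main obstacle is the boundary bookkeeping in the integration by parts: one must choose the interpolant to vanish on $\partial K$ while keeping local $\HH^1$-stability and the $h_\tau$-scaled $L^2$-approximation estimate, for otherwise a term $\|p_h\|_{0,\partial K}$ would survive that cannot be absorbed into $|p_h|_{h,K}$; the boundary-adapted averaging domains of the Scott--Zhang operator make this go through, and the fact that the local pressure is unconstrained (unlike in Stokes, where it is mean-free) is exactly what forces the extra piece $\vv_h^1$.
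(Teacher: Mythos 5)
Your argument is correct, and it reaches the result by a more self-contained route than the paper. The paper's own proof has the same two-step skeleton you use — first get a test function in $\VV_h(K)$, then pass to $\tildeVV_h(K)$ by stripping off the rigid-body part, using $\Div\vvrm=0$ and the local Korn inequality \cref{eq:korn2ndlocal} — but it obtains the $\VV_h(K)$-level inequality by simply invoking \cite[Lemma~3.3]{stabilizedElasticity}, whereas you re-derive it: the Bogovskii right inverse of the divergence for the zero-mean part, a boundary-preserving Scott--Zhang interpolant plus elementwise integration by parts to generate the $|p_h|_{h,K}$ term, and the explicit affine field $\bar p_h(\xx-\xx_K)$ to handle the unconstrained pressure mean. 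What your version buys is transparency: it makes explicit where the mesh-dependent seminorm comes from, why no mean-zero constraint on $p_h$ is needed (the extra $\PP_1$ field, which is precisely the feature absent in the Stokes setting), and that all constants depend only on the shape of $K$ and the shape regularity of $\calT_h^K$, not on $h$ or $\upepsilon$ — facts the paper inherits implicitly from the cited lemma. The only point to tidy is the final division step: the bound $\|\tilde\vv_h\|_{1,K}\leqslant C_{korn,K}C_K^\ast\|p_h\|_{0,K}$ yields the quotient estimate only when $\int_K p_h\,\Div\tilde\vv_h\geqslant 0$; with your constants this is automatic whenever $C_1\|p_h\|_{0,K}-C_2|p_h|_{h,K}>0$, and in the remaining case the claimed inequality is trivially satisfied (e.g.\ after flipping the sign of $\tilde\vv_h$), so the gap is cosmetic — and it is one the paper's own chain of inequalities glosses over as well.
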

\begin{proof}
Given $p_h \in Q_h(K)$, the proof of \cite[Lemma 3.3]{stabilizedElasticity} exhibit a function $\vv_h \in \VV_h(K) := \{ \vv_h^{}\in C^0(K): \vv_h^{}\sVert[0]_{\tau}^{}\in\PP_k^{}(\tau)^d\,,\forall\, \tau\in \calT_h^K\}$ satisfying
\begin{align}\label{eq:preStabilityLemmaAux1}
\frac{1}{\|\vv_h\|_{1,K}}\int_K p_h\,(\Div\,\vv_h) \dif x \;\geqslant\; C_1 \|p_h\|_{0,K} - C_2 |p_h|_{h,K}\,,
\end{align}
where the constants $C_1, C_2$ do not depend on $p_h \in Q_h(K)$.
Since every function $\vv_h \in \VV_h(K)$ can be written (uniquely) as $\vv_h = \tilde\vv_h + \vvrm$ where $\tilde\vv_h \in \tildeVV_h(K)$, $\EE(\vvrm) = 0$ and $\int_K \tilde\vv_h\cdot\vvrm \dif x = 0$, it holds
\begin{align*}
\frac{1}{\|\vv_h\|_{1,K}}\int_K p_h\,(\Div\,\vv_h) \dif x
&=
\frac{1}{\sqrt{\|\vv_h\|_{0,K}^2 + \|\nabla \vv_h\|_{0,K}^2}}\int_K p_h\,(\Div\,\vv_h) \dif x \nonumber\\
&\leqslant
\frac{1}{\sqrt{\|\tilde\vv_h\|_{0,K}^2 + \|\vvrm\|_{0,K}^2 + \|\EE(\vv_h)\|_{0,K}^2}}\int_K p_h\,(\Div\,\vv_h) \dif x \nonumber\\
&\leqslant
\frac{1}{\sqrt{\|\tilde\vv_h\|_{0,K}^2 + \|\EE(\tilde\vv_h)\|_{0,K}^2}}\int_K p_h\,(\Div\,\tilde\vv_h) \dif x \nonumber\\
&\leqslant
\frac{C_{korn,K}}{\|\tilde\vv_h\|_{1,K}}\int_K p_h\,(\Div\,\tilde\vv_h) \dif x\,,
\end{align*}
for all $\vv_h \in \VV_h(K)$, where we also used Korn's inequality for the space $\tildeVV(K)$ from \cite{10.1093/imanum/drac041}.
Thus, we obtain \cref{eq:preStabilityLemma} by replacing the last expression into \cref{eq:preStabilityLemmaAux1}.
\end{proof}


\bibliographystyle{siamplain}
\bibliography{references}

\end{document}